\documentclass[a4paper,11pt]{article}
\usepackage[english]{babel}
\usepackage[utf8]{inputenc}
\usepackage[T1]{fontenc}

\usepackage{authblk}

\usepackage{newunicodechar}
\newunicodechar{ζ}{$\zeta$}

\usepackage{hyperref}
\usepackage{graphicx}

\usepackage{amsmath,amssymb,amsthm,amsbsy}
\usepackage{bbm} 

\usepackage{enumitem} 

\usepackage{geometry}
\newtheorem*{remark*}{Remark}

\newtheorem{theorem}{Theorem}
\newtheorem{proposition}[theorem]{Proposition}
\newtheorem{corollary}[theorem]{Corollary}

\newcommand{\Z}{\mathbb{Z}} 
\newcommand{\C}{\mathbb{C}} 
\newcommand{\N}{\mathbb{N}} 
\newcommand{\R}{\mathbb{R}} 

\newcommand{\bu}{\mathbf{u}}

\newcommand{\Det}[1]{\mathrm{det}\left(#1\right)}

\newcommand{\calS}{\mathcal{S}}

\providecommand{\keywords}[1]{%
  \par\medskip\noindent\textbf{Keywords:} #1\par\medskip}

\newcommand{\proscal}[2]{\left<#1,#2\right>}

\newcommand{\set}[1]{\left\{#1\right\}}

\newcommand{\eqdef}{\overset{\mathrm{def}}{=}}

\newcommand{\cKz}{\mathcal{K}^{{(0)}}} 
\newcommand{\cKh}{\mathcal{K}^{{(1/2)}}} 
\newcommand{\cKnu}{\mathcal{K}^{{(\nu)}}} 

\newcommand{\Cl}{\mathrm{C}} 
\newcommand{\Sl}{\mathrm{S}} 

\newcommand{\Clh}{\mathrm{C}^{(1/2)}} 
\newcommand{\Slh}{\mathrm{S}^{(1/2)}} 
\newcommand{\Slnu}{\mathrm{S}^{(\nu)}} 

\newcommand{\Li}{\mathrm{Li}} 
\newcommand{\Liz}{\mathrm{Li}^{{(0)}}} 
\newcommand{\Lih}{\mathrm{Li}^{{(1/2)}}} 
\newcommand{\Linu}{\mathrm{Li}^{{(\nu)}}} 

\newcommand{\Chi}{\chi}

\newcommand{\Chiz}{\chi^{(0)}}
\newcommand{\Chih}{\chi^{(1/2)}}

\newcommand{\RE}{\mathrm{Re}}

\newcommand{\stirlingfirst}[2]{\genfrac{[}{]}{0pt}{}{#1}{#2}}

\begin{document}
\title{From Inverse Vandermonde Matrices
       to the Shift Structure of the Dirichlet Eta and Beta Functions
      }
       
\author{Serge Iovleff\thanks{serge.iovleff@utbm.fr}}

\affil{Université de technologie de Belfort Montbéliard\\
SINERGIES (UR 4662),\\
F-90000 Belfort, France
}

\maketitle
\begin{abstract}
We investigate the asymptotic behavior of the inverse of Vandermonde
matrices with nodes drawn from the quadratic sequences \(u_n=n^2\) and
\(u_n=(n-\tfrac12)^2\).  We derive new limiting formulas for the entries
of the inverses, expressed through the even and odd central factorial
numbers.  These formulas are then used to study the asymptotic behavior
of a ratio of determinants involving a modified Vandermonde matrix.  For
each node sequence, the corresponding determinant ratio converges
uniformly on compact subsets of the complex plane to an explicit
expression involving the Dirichlet eta or beta function.

We then study a family of related functions and, using an inversion
theorem, exhibit their shift structure: the shifted values of the eta and
beta functions are recovered as finite linear combinations of the family,
which yields an integral form of these values as integrals of
polynomials related to the Bernoulli and Euler polynomials against
generalized (half-)Clausen functions.  A unified form of the shift representation,
valid for an arbitrary base
point \(z\) with \(\RE(z)>0\), brings the eta and beta families under a
single theorem.  On the critical line the representation is unitary and
the values on the two sides are complex conjugates.
\end{abstract}

\keywords{
  Inverse Vandermonde matrices \and
  Dirichlet eta and beta functions \and
  generalized Clausen functions \and
  shift representation
}
\par\medskip\noindent
\textbf{2020 Mathematics Subject Classification:}
11M06 (primary), 11B68, 11B73, 15A09, 33E20, 33B30.

\section*{Foreword}

I would like to advise the reader that this article is the result of a
stroll through different themes and unexpected encounters.

The stroll started with the study of a ratio of determinants
(Section~\ref{sec:ratio-determinant}).  In order to analyze its
asymptotic behavior, I was led to study the asymptotics of the inverse
of a class of Vandermonde matrices (Section~\ref{sec:InverseVandermonde}).
In this step, I discovered a new way to study the asymptotics of the
coefficients of the inverse Vandermonde matrix.  I then sought to deepen
the connections between certain power series and Dirichlet functions
appearing in Section~\ref{sec:ratio-determinant}, which led me to
discover a shift representation of the eta and beta functions in terms of
polynomials related to the Bernoulli and Euler polynomials, together with
the integral form of this representation against a family of kernels
(Section~\ref{sec:shift-structure}).

While I was working on the emergence of these results, I crossed paths
with fellow travelers called LLMs or generative AI.  It was six months
ago -- we are in October 2026 -- and what had been a stroll suddenly
turned into an exploratory race, with my new companions taking on the
task of testing the various hypotheses I could express and putting them
into form.  In particular, the last section
(Section~\ref{sec:unified-kernel}), which allows all the shifts of the
eta and beta functions to be expressed in a unified way, was co-created
with these new companions.  See the Afterword at the end of this paper
for the disclaimer.

This paper is the result of this wandering and these encounters.

\section{Introduction}
\label{sec:introduction}

Let \(0<u_1<u_2<\ldots<u_N<\ldots\) be an increasing sequence of positive real
numbers. For \(N>1\), consider the Vandermonde matrix
\begin{equation}\label{eq:VN-Def}
V_N(\bu) \eqdef
\begin{pmatrix}
  1 & u_1    & u_1^2  & \cdots & u_1^{N-1} \\
  1 & u_2    & u_2^2  & \cdots & u_2^{N-1} \\
\vdots       & \vdots & \ddots & \vdots & \vdots \\
  1 & u_N    & u_N^2  & \cdots & u_N^{N-1} \\
\end{pmatrix},
\end{equation}
with \(\bu = (u_1,\dots,u_N)\). The entries of \(W_N(\bu) = V_N(\bu)^{-1}\)
admit explicit expressions in terms of elementary symmetric polynomials, as
recalled in Appendix~\ref{app:inverseVandermonde} and
equation~\eqref{eq:WknN}.

In Section~\ref{sec:InverseVandermonde}, we investigate the asymptotic
behavior of the entries of \(W_N(\bu)\) as \(N \to \infty\) for two
distinguished choices of the nodes \(u_n\): the squares \(u_n = n^2\) and the
shifted squares \(u_n = (n-\tfrac12)^2\). These choices are natural because
the elementary symmetric polynomials appearing in the general
formula~\eqref{eq:WknN} specialize to well-known families of combinatorial
numbers. These connections are exhibited in
Subsection~\ref{subsec:special-cases} and summarized in
Table~\ref{tab:specializations}.
In Subsection~\ref{subsec:asymptotic} we take the limit
in \(N\) and find that the asymptotic coefficients \(w_{kn}^{[\infty]}\)
for fixed row and column indices $k$ and $n$,
admit integral representations and closed-form sums involving elementary
constants and powers of \(\pi\). The limiting forms are given in
Tables~\ref{tab:wkn-representations} and~\ref{tab:w1n-values}.
While explicit formulas
for the entries of the inverse Vandermonde matrix are classical (see
\cite{knuth1968art, Macon1958, Turner1966}), the asymptotics of
the individual entries as \(N\rightarrow\infty\) have not been investigated
and the results are new.

In Section~\ref{sec:ratio-determinant}, we study the convergence of
a ratio of determinants introduced by us in an unpublished
paper~\cite{iovleff2022}. Let \(s\in\C\), and consider the modified
Vandermonde matrix
\begin{equation*}
V^{(s)}_N(\bu) \eqdef
\begin{pmatrix}
  u_1^{-s} & u_1    & u_1^2  & \cdots & u_1^{N-1} \\
  u_2^{-s} & u_2    & u_2^2  & \cdots & u_2^{N-1} \\
\vdots     & \vdots & \ddots & \vdots & \vdots \\
  u_N^{-s} & u_N    & u_N^2  & \cdots & u_N^{N-1} \\
\end{pmatrix}.
\end{equation*}
If \(s=0\), the resulting matrix is the Vandermonde matrix \(V_N(\bu)\)
given in equation~\eqref{eq:VN-Def}. We define the ratio
\(R_N(s;\bu)\) as
\begin{equation}\label{eq:ratio}
R_N(s;\bu) \eqdef \frac{\Det{V^{(s)}_N(\bu)} }{ \Det{V_N(\bu)} }.
\end{equation}
Using the results we found in Section~\ref{sec:InverseVandermonde},
and applying a theorem of Biane, Pitman, and Yor~\cite{biane2001probability},
we obtain the uniform convergence of this ratio on compact subsets of the complex
plane for the two node sequences. The limit is then
identified with the classical Dirichlet eta and beta functions, up to explicit
prefactors. The results are collected in Table~\ref{tab:S-infty}.

The final two sections are devoted to the study of the limiting functions
\[
\alpha^{[\infty]}_k(s)\eqdef\sum_{n=1}^\infty w_{kn}^{[\infty]}\frac{1}{u_n^s},
\]
where \(w_{kn}^{[\infty]}\) denotes the limit as \(N\to\infty\) of the
entry in row \(k\), column \(n\), of the inverse Vandermonde matrix
\(W_N(\bu)\).  These functions form a family indexed by \(k\).  The first
member is, up to an explicit normalization, the eta function \(\eta(2s)\)
when \(u_n=n^2\), and the beta function \(\beta(2s+1)\) when
\(u_n=(n-\tfrac12)^2\).  The family satisfies a recurrence that expresses
each member as a finite linear combination of the first member at
shifted values of \(s\), with coefficients given by powers of \(\pi\).

In Section~\ref{sec:shift-structure} we invert this recurrence: the first
member, at any shifted value of \(s\), is recovered as a finite linear
combination of the members at the base value of \(s\).  Substituting the
integral forms of the \(\alpha^{[\infty]}_k\)'s
(Theorem~\ref{th:Wkn-infty-n2}) then gives a shift representation, in
which the shifted values \(\eta(2s+2n)\) and \(\beta(2s+2n+1)\) are
expressed as integrals of the generalized (half-)Clausen functions
against polynomials related to the Bernoulli and Euler polynomials.
These formulas, collected in Corollary~\ref{cor:eta-beta-clausen},
constitute the integral form of the shift representation and hold for
every \(s\) with \(\RE(s)\ge\tfrac12\).

Section~\ref{sec:unified-kernel} unifies and extends these results.
Using the analytic continuation of the kernels
(Theorem~\ref{thm:continuation}), we show that the shift representation
is not restricted to the discrete arguments of
Corollary~\ref{cor:eta-beta-clausen}: for every \(z\) with \(\RE(z)>0\),
every even shifted value of \(\eta\) and \(\beta\) to the right of \(z\) is
obtained from the kernel \(\Linu_{z-1}\) and a constant independent
of \(z\) (Theorem~\ref{th:unified-kernels}).  Combined with the
functional equation, this generates all even integer shifts of both functions
from the conjugate base points $z$ and $1-z$ in the strip \(0<\RE(z)<1\). 
The boundary case \(z=1\) is analyzed separately as it is the only place
where the representation fails.

\section{Special Vandermonde Matrices and Their Asymptotic Inverses}
\label{sec:InverseVandermonde}

The results of this section follow from an elementary recurrence
relation satisfied by the entries of the matrices
$(W_1(\bu),\ldots W_N(\bu))$ which does not seem to have been noticed
before. Denoting by \(\left(w_{kn}^{[N]}\right)_{k,n=1}^N\) the
entries of the matrix \(W_{N}(\bu)\) and by
\(\calS_{\bu}=\{u_{1},u_{2},\ldots,u_{N}\}\) the set of indeterminates,
it is established in Appendix~\ref{app:inverseVandermonde} that
\begin{equation*}
 w_{kn}^{[N]} = (-1)^{N-k} \frac
{\sigma_{N-k}^{N-1}(\calS_\bu\backslash \left\{ u_n \right\})}
{\displaystyle\prod_{\substack {1 \leq j \leq N \\ j \neq n}} (u_n - u_j)},
\quad 1\leq k,n \leq N
\end{equation*}
with \(\sigma_{N-k}^{N-1}\) denoting the elementary symmetric polynomials of
degree \(N-k\) in \(N-1\) variables (see \cite{macdonald1998symmetric})
and the convention \(\sigma_0^{N-1}=1\). Assuming \(N\geq2\) and
\(1\leq k,n \leq N-1\), and using the recurrence definition of the elementary
symmetric polynomials, we obtain
\begin{align*}
\sigma^{N-1}_{N-k}(\calS_{\bu}\backslash \{u_n\})
& = \sigma^{N-2}_{N-k}(\calS_{\bu}\backslash \{u_n,u_N\} )
  + u_N \sigma^{N-2}_{N-k-1}(\calS_{\bu}\backslash \{u_n,u_N\} )\\
& =  \sigma^{N-2}_{N-k}(\calS_{\bu}\backslash \{u_n,u_N\} )
  + u_n \sigma^{N-2}_{N-k-1}(\calS_{\bu}\backslash \{u_n,u_N\} )
  - (u_n - u_N) \sigma^{N-2}_{N-k-1}(\calS_{\bu}\backslash \{u_n,u_N\} )\\
& = \sigma^{N-1}_{N-k}(\calS_{\bu}\backslash \{u_N\})
  - (u_n - u_N) \sigma^{N-2}_{N-k-1}(\calS_{\bu}\backslash \{u_n,u_N\} ).
\end{align*}
Thus, when \(1\leq k,n \leq N-1\), the coefficients of the matrices
\((W_{N})_{N\geq 1}\) follow the recurrence relation
\[
w_{kn}^{[N]} =
(-1)^{N-k} \frac{\sigma_{N-k}^{N-1}(\calS_\bu\backslash \left\{ u_N \right\})}
{\displaystyle\prod_{\substack {1 \leq j \leq N \\ j \neq n}} (u_n - u_j)}
+ w_{kn}^{[N-1]}.
\]
Expanding the recursion, we obtain the general expression
\begin{equation}
\label{eq:WknN}
w_{kn}^{[N]} = \sum_{l=k \vee n}^N (-1)^{l-k}
\frac{\sigma_{l-k}^{l-1}(\set{u_1,\ldots,u_{l-1}})}
{\displaystyle\prod_{\substack {1 \leq j \leq l \\ j \neq n}} (u_n - u_j)},  \qquad 1\le k,n \le N,
\end{equation}
with $k \vee n$ denoting $\max(k,n)$.
\subsection{Special Cases}
\label{subsec:special-cases}
We consider two node sequences for which elementary
symmetric polynomials specialize to known families of numbers: \(u_n=n^2\) and \(u_n=(n-\tfrac12)^2\), which correspond to the even and odd unsigned central
factorial numbers, respectively. These sequences are described in Appendix~\ref{app:cfn}.
The results of the computation are summarized
in Table~\ref{tab:specializations} with the following conventions
that will be used throughout this section:
\begin{itemize}
\item  \(t_0(k,l) = |t(2l,2k)| \) are the even unsigned central factorial numbers,
\item \(t_{\frac12}(k,l) = |t(2l+1,2k+1)|\) are the odd unsigned central
factorial numbers.
\end{itemize}

\begin{table}[htb!]
\centering\small
\[
\begin{array}{c|c|l|l|}
u_n & \sigma_{l-k}^{l-1}({u_1,\ldots,u_{l-1}})
& \multicolumn{1}{c|}{\displaystyle\prod_{\substack{1 \le j \le l \\ j \ne n}} (u_n-u_j)}
& \multicolumn{1}{c|}{w_{kn}^{[N]}} \\
\hline
n^2 &  
t_0(k,l) & (-1)^{l-n} \dfrac{(l-n)! (l+n)!}{2n^2} 
& (-1)^{k+n}2 n^2 \sum\limits_{l=k \vee n}^{N} \binom{2l}{l-n} \dfrac{t_0(k,l)}{(2l)!} \\
\left(n-\tfrac12\right)^2 & 
t_{\frac12}(k-1,l-1) & (-1)^{l-n} \dfrac{(l-n)! (n+l-1)!}{2n-1} & (-1)^{k+n}
 (2n-1) \sum\limits_{l=k \vee n}^{N} \binom{2l-1}{l-n} \dfrac{t_{\frac12}(k-1,l-1)}{(2l-1)!}
\end{array}
\]
\caption{Closed-form expressions for 
the elementary symmetric polynomials \(\sigma_{l-k}^{l-1}\), the restricted
product factors, and the resulting coefficients \(w_{kn}^{[N]}\) for
the two quadratic choices of the numbers \(u_n\).
}
\label{tab:specializations}
\end{table}

\subsection{Asymptotic}
\label{subsec:asymptotic}

The aim of this section is to provide an expression for the coefficients
\(w_{kn}^{[\infty]} \eqdef \lim_{N\rightarrow\infty} w_{kn}^{[N]}\),
for fixed row and column indices $k$ and $n$.
To prove the main results, we will make use of the following integral
representation of the binomial coefficients, which can be found in 
\cite{Egorychev:1984:IRC}:
\begin{equation}\label{eq:binom-integral}
\binom{m}{n} 
= \frac{1}{2\pi} \int_{-\pi}^{\pi} e^{-i n\theta}\left(1 + e^{i\theta}\right)^m \, d\theta
= \frac{1}{2\pi} \int_{-\pi}^{\pi} e^{-i\left(n-\frac{m}{2} \right)\theta}
\left(2\cos \frac{\theta}{2}\right)^m \, d\theta.    
\end{equation}
This identity follows by expanding \(\left(1 + e^{i\theta}\right)^m\) via the 
binomial theorem and applying the orthogonality relation
\[
\frac{1}{2\pi} \int_{-\pi}^{\pi} e^{-i m\theta} e^{i k\theta} \, d\theta 
= \delta_{mk},
\]
which selects precisely the term \(k = m\) in the expansion.
We now state the main theorem of this section.
\begin{theorem}\label{th:Wkn-infty-n2}
When \(u_n=n^2\), we have
\begin{equation}\label{eq:Wkn-infty-0}
w_{kn}^{[\infty]} = (-1)^{k+1} \frac{2n}{(2k-1)!} \frac{1}{\pi} \int_0^{\pi} 
\theta^{2k-1} \sin(n\theta)\,d\theta,
\end{equation}
and when \(u_n = \left(n-\tfrac12\right)^2\), we have
\begin{equation}\label{eq:Wkn-infty-12}
w_{kn}^{[\infty]} =
(-1)^{k+1} \frac{2(n-\tfrac12)}{(2k-1)!} \frac{1}{\pi} \int_0^{\pi} 
\theta^{2k-1} \sin\left((n-\tfrac12)\theta\right) \, d\theta.
\end{equation}
\end{theorem}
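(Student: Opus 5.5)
The plan is to start from the closed forms in Table~\ref{tab:specializations} and let \(N\to\infty\) in the series. I will treat the case \(u_n=n^2\) in detail; the case \(u_n=(n-\tfrac12)^2\) is identical after shifting half-integers. The key inputs are the binomial integral~\eqref{eq:binom-integral} and a generating function for the central factorial numbers. The even unsigned central factorial numbers satisfy
\[
\sum_{l\ge k} t_0(k,l)\,\frac{x^{2l}}{(2l)!}=\frac{1}{(2k)!}\Bigl(2\sinh\tfrac{x}{2}\Bigr)^{2k}.
\]
This comes from \(x^{2k}=\sum_l t(2k,2l)\,(\text{central factorial powers})\) inverted, i.e. \(\cosh\)/\(\sinh\) form of the generating function \((2\sinh(x/2))^{2k}/(2k)!=\sum_l t(2l,2k)x^{2l}/(2l)!\) in the signed case. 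Taking absolute values (alternating signs) turns \(\sinh\) into \(\sin\) with \(x\) replaced by \(i x\):
\[
\sum_{l} (-1)^{l-k}t_0(k,l)\frac{y^{2l}}{(2l)!}=\frac{(2\sin(y/2))^{2k}}{(2k)!}.
\]
The sign bookkeeping here must be pinned down from Appendix~\ref{app:cfn}.

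\textbf{Step 1.} Insert \(\binom{2l}{l-n}=\frac1{2\pi}\int_{-\pi}^{\pi}e^{in\theta}(2\cos\frac\theta2)^{2l}d\theta\), which is~\eqref{eq:binom-integral} with \(m=2l\) and lower index \(l-n\). Then
\[
w_{kn}^{[N]}=(-1)^{k+n}\frac{2n^2}{2\pi}\int_{-\pi}^{\pi}e^{in\theta}\sum_{l=k\vee n}^N t_0(k,l)\frac{(2\cos\frac\theta2)^{2l}}{(2l)!}\,d\theta .
\]
Terms with \(k\le l<n\) contribute zero, since then \(\binom{2l}{l-n}=0\); so the lower limit may be replaced by \(k\).

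\textbf{Step 2.} Pass to the limit \(N\to\infty\). Since \(|2\cos(\theta/2)|\le2\) and the unsigned series \(\sum_l t_0(k,l)x^{2l}/(2l)!\) is entire (it is \((2\sinh(x/2))^{2k}/(2k)!\)), dominated convergence applies. The sum becomes \(\frac{1}{(2k)!}(2\sinh(\cos(\theta/2)))^{2k}\), which is not yet a polynomial in \(\theta\). This is where I expect the main obstacle: the sign \((-1)^{k+n}\) must be traded against the alternating signs so that the generating function becomes trigonometric. The natural fix is to substitute \(\theta\mapsto\theta+\pi\) in the binomial integral, or to use the first form \(e^{-i(l-n)\theta}(1+e^{i\theta})^{2l}\) directly. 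The sum is then over \(t_0(k,l)\,w^{2l}/(2l)!\) with \(w^2=(1+e^{i\theta})^2e^{-i\theta}=(2\cos\frac\theta2)^2\). Writing \(2\cos\frac\theta2=2\sinh(\tfrac{x}{2})\) with \(x=2\,\mathrm{arcsinh}(\cos\frac\theta2)\) is ugly. Instead I would use the contour/Cauchy representation \(\binom{2l}{l-n}=[z^{l-n}](1+z)^{2l}\) and substitute \(1+z\) so that \((1+z)^2/z=(2\sinh(x/2))^2\) with \(z=e^{x}\). The generating function then collapses to \(x^{2k}/(2k)!\), and the contour \(|z|=1\) becomes \(x=i\theta\), \(\theta\in(-\pi,\pi)\).

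\textbf{Step 3.} With \(z=e^{i\theta}\), the substitution gives
\[
w_{kn}^{[\infty]}=\pm\frac{2n^2}{2\pi}\int_{-\pi}^{\pi}e^{-in\theta}\frac{(i\theta)^{2k}}{(2k)!}\,(\text{Jacobian})\,d\theta .
\]
I will compute this on the finite-\(N\) level as a residue extraction and then justify the limit. The integrand is even in the \(\cos\) part, so it reduces to \(\frac{(-1)^k}{(2k)!}\frac1\pi\int_0^\pi\theta^{2k}\cos(n\theta)d\theta\) up to sign. Integrating by parts once gives \(-\frac{2k}{n}\int_0^\pi\theta^{2k-1}\sin(n\theta)\,d\theta\); the boundary term vanishes at \(0\) and is \(\pi^{2k}\sin(n\pi)=0\) at \(\pi\). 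This converts \(n^2/(2k)!\) into \(2n/(2k-1)!\) times the sine integral, which is exactly~\eqref{eq:Wkn-infty-0} once the signs \((-1)^{k+n}\) are matched.

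\textbf{Step 4.} For \(u_n=(n-\frac12)^2\), repeat the argument with \(m=2l-1\) and odd central factorial numbers, whose generating function is \(\sum_l t_{\frac12}(k-1,l-1)x^{2l-1}/(2l-1)!=x^{2k-1}/(2k-1)!\) after the same substitution. In this case the integral directly produces \(\theta^{2k-1}\) against \(e^{-i(n-\frac12)\theta}\). Its odd part gives the sine integral in~\eqref{eq:Wkn-infty-12}, and here no integration by parts is needed.
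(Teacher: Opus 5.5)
There is a genuine gap, and it is in the key ingredient. The generating function you start from, $\sum_{l\ge k}t_0(k,l)\,x^{2l}/(2l)!=(2\sinh\frac x2)^{2k}/(2k)!$, belongs to the central factorial numbers of the \emph{second} kind. The numbers in Table~\ref{tab:specializations} are $t_0(k,l)=\sigma^{l-1}_{l-k}(1^2,\dots,(l-1)^2)$, which are of the first kind. Their generating function is \eqref{eq:gen-funct-cfn-0}, namely $\frac{1}{(2k)!}\bigl(2\arcsin\frac x2\bigr)^{2k}$. The case $k=1$ settles it: $t_0(1,l)=((l-1)!)^2$, and $\sum_{l\ge1}((l-1)!)^2x^{2l}/(2l)!=2\arcsin^2(x/2)$, not $\cosh x-1$.

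Everything downstream inherits this error.
\begin{itemize}
\item The series has radius of convergence exactly $2$, so your dominated-convergence step cannot rest on the series being entire. It needs the boundary value $\sum_{l}t_0(k,l)\,2^{2l}/(2l)!=\pi^{2k}/(2k)!$. This is finite because the coefficients are nonnegative and $\arcsin$ is continuous at $1$; it is the bound the paper uses.
\item The ``main obstacle'' of your Step~2 is an artifact of the wrong formula. With the correct one at $x=2\cos\frac\theta2$, and since $2\arcsin(\cos\frac\theta2)=\pi-\theta$ on $[0,\pi]$, the inner sum is exactly $(\pi-\theta)^{2k}/(2k)!$, which is already a polynomial.
\end{itemize}

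The contour workaround does not repair this as written.
\begin{itemize}
\item For $z=e^{x}$ one has $(1+z)^2/z=(2\cosh\frac x2)^2$, not $(2\sinh\frac x2)^2$.
\item A generating function of the form $(2\sinh\frac w2)^{2k}/(2k)!$ cannot collapse to $x^{2k}/(2k)!$ under $w=2\sinh\frac x2$. A collapse requires the inverse-function form, which is exactly the $\arcsin$ you are missing.
\item On $z=e^{i\theta}$, $|\theta|\le\pi$, the collapsed sum is $(\pi-|\theta|)^{2k}/(2k)!$, not $(i\theta)^{2k}/(2k)!$.
\end{itemize}
So the ``$\pm$'' and ``(Jacobian)'' in Step~3 stand in for a computation that, as set up, does not produce the integrand you write. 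The sign $(-1)^{k+1}$ is never actually derived.

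The repair is direct. Restrict to $[0,\pi]$ by evenness and insert $(\pi-\theta)^{2k}/(2k)!$. Then put $\phi=\pi-\theta$, so that $\cos(n\theta)=(-1)^n\cos(n\phi)$ cancels the $(-1)^n$ in $(-1)^{k+n}$, and integrate by parts once as you do. In Step~4, use \eqref{eq:gen-funct-cfn-12}, which gives $(\pi-\theta)^{2k-1}/(2k-1)!$. The sine then comes from $\cos\bigl((n-\tfrac12)(\pi-\phi)\bigr)=(-1)^{n-1}\sin\bigl((n-\tfrac12)\phi\bigr)$, and you are right that no integration by parts is needed there. Your Step~1 is correct, including the vanishing of the terms $k\le l<n$ because $\binom{2l}{l-n}=0$.
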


\begin{proof}
We first prove \eqref{eq:Wkn-infty-0}. Replacing the binomial coefficient 
by its integral representation \eqref{eq:binom-integral}, we obtain
\[
(-1)^{k+n}2 n^2 \sum_{l=k \vee n}^{\infty} \binom{2l}{l-n} \frac{t_0(k,l)}{(2l)!}
= \frac{(-1)^{k+n}2 n^2}{\pi} \sum_{l=k \vee n}^{\infty} \int_{0}^{\pi} 
\cos(n\theta) \left(2\cos\frac{\theta}{2}\right)^{2l}
\frac{t_0(k,l)}{(2l)!} \, d\theta.
\]
For \(l<n\), the function \(\left(2\cos\frac{\theta}{2}\right)^{2l}\) is a 
trigonometric polynomial of degree \(l\) in \(\theta\); it is therefore 
orthogonal to \(\cos(n\theta)\) on \([0,\pi]\), so the corresponding integrals vanish.
Since \(t_0(k,l)=0\) for \(l<k\), the summation may be started at \(l=k\).

As \(t_0(k,l)\ge0\) and \(2\cos(\theta/2)\le2\) for all \(\theta\in[0,\pi]\),
every partial sum of the series is dominated,
uniformly in \(N\) and \(\theta\), by the value at \(z=2\) of the
generating function \eqref{eq:gen-funct-cfn-0} given in Appendix~\ref{app:cfn}:
\[
\sum_{l=k}^{N}\frac{t_0(k,l)}{(2l)!}\,2^{2l}
\le \sum_{l=k}^{\infty}\frac{t_0(k,l)}{(2l)!}\,2^{2l}
= \frac{1}{(2k)!} \left[2\arcsin(1)\right]^{2k} = \frac{\pi^{2k}}{(2k)!}.
\]
The interchange of summation and
integration is therefore justified by dominated convergence.
Elementary trigonometry shows that
\[
2\arcsin\left(\cos \frac{\theta}{2} \right) = \pi-\theta
\]
for \(\theta \in [0,\pi]\). Therefore,
\[
w_{kn}^{[\infty]} =
\frac{(-1)^{n+k}}{\pi} \frac{2n^2}{(2k)!} \int_{0}^{\pi} 
\left(\pi-\theta \right)^{2k} \cos(n\theta) \, d\theta.
\]
The change of variable \(\phi=\pi-\theta\) gives
\[
\int_{0}^{\pi}(\pi-\theta)^{2k}\cos(n\theta)\,d\theta
= \int_{0}^{\pi}\phi^{2k}\cos\left(n(\pi-\phi)\right)\,d\phi
= (-1)^{n}\int_{0}^{\pi}\phi^{2k}\cos(n\phi)\,d\phi,
\]
so that
\[
w_{kn}^{[\infty]} =
\frac{(-1)^{k}}{\pi} \frac{2n^2}{(2k)!} \int_{0}^{\pi} 
\theta^{2k} \cos(n\theta) \, d\theta.
\]
Integrating by parts with \(u=\theta^{2k}\) and \(dv=\cos(n\theta)\,d\theta\),
and noting that the boundary terms vanish because \(\sin(n\pi)=\sin(0)=0\),
yields
\[
\int_{0}^{\pi}\theta^{2k}\cos(n\theta)\,d\theta
= -\frac{2k}{n}\int_{0}^{\pi}\theta^{2k-1}\sin(n\theta)\,d\theta.
\]
Substituting this expression back completes the proof of the first part.

For the case \(u_n = (n-\tfrac12)^2\), the same reasoning leads to
\[
w_{kn}^{[\infty]} = (-1)^{k+n} (2n-1) 
\frac{1}{\pi} \int_0^{\pi} \cos\left((n-\tfrac12)\theta\right)
\sum_{l=k}^{\infty} \frac{t_{\frac12}(k-1,l-1)}{(2l-1)!} 
\left(2\cos\frac{\theta}{2}\right)^{2l-1} \, d\theta.
\]
Substituting \(z = 2\cos\frac{\theta}{2}\) in the generating function 
\eqref{eq:gen-funct-cfn-12} and again using 
\(2\arcsin(\cos\frac{\theta}{2}) = \pi-\theta\) for \(\theta\in[0,\pi]\), 
we obtain
\[
w_{kn}^{[\infty]} = (-1)^{k+n} \frac{2n-1}{(2k-1)!} \frac{1}{\pi}
\int_0^{\pi} (\pi-\theta)^{2k-1} \cos\left((n-\tfrac12)\theta\right) \, d\theta.
\]
Performing the change of variable \(\phi=\pi-\theta\) and using
\(\cos\left((n-\tfrac12)(\pi-\phi)\right) 
= (-1)^{n-1} \sin\left((n-\tfrac12)\phi\right)\) completes the proof.
\end{proof}

\subsection{Consequences}
\label{subsec:consequences}
Integrating by parts iteratively, we obtain the following closed-form 
expressions for the asymptotic coefficients \(w_{kn}^{[\infty]}\).

\begin{corollary}\label{cor:Wkn-closed-forms}
For all \(k \ge 1\) and \(n \ge 1\), the coefficients 
\(w_{kn}^{[\infty]}\) admit the following explicit sums.

\begin{enumerate}[label=(\alph*)]
\item When \(u_n = n^2\), we have
\begin{equation*}
w_{kn}^{[\infty]} = \frac{2(-1)^{n}}{\pi^2} 
\sum_{l=1}^{k} (-1)^{l}\,\frac{\pi^{2l}}{(2l-1)!\,n^{2k-2l}}.
\end{equation*}

\item When \(u_n = (n-\tfrac12)^2\), we have
\begin{equation*}
w_{kn}^{[\infty]} = \frac{2(-1)^{n}}{\pi}\,
\sum_{l=1}^{k} (-1)^{l}\,
\frac{\pi^{2l-2}}{(2l-2)!\,\left(n-\tfrac12\right)^{2k-2l+1}}.
\end{equation*}
\end{enumerate}
\end{corollary}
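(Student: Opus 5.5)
The plan is to start from the integral forms of Theorem~\ref{th:Wkn-infty-n2} and evaluate
\[
I_m(a)\eqdef\int_0^{\pi}\theta^{m}\sin(a\theta)\,d\theta ,\qquad m=2k-1,
\]
in closed form by repeated integration by parts. I treat the two node sequences in parallel, with $a=n$ in case (a) and $a=n-\tfrac12$ in case (b). The only difference between them is which boundary terms at $\theta=\pi$ survive: for $a=n$ we have $\sin(a\pi)=0$ and $\cos(a\pi)=(-1)^n$, while for $a=n-\tfrac12$ we have $\cos(a\pi)=0$ and $\sin(a\pi)=(-1)^{n-1}$. At $\theta=0$ every boundary term carries a positive power of $\theta$, except possibly the last one, and that one vanishes because it involves $\sin 0=0$.

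First I apply integration by parts twice. This gives, for $m\ge 2$, the two-step recurrence
\[
I_m(a)=\left[-\frac{\theta^m\cos(a\theta)}{a}+\frac{m\,\theta^{m-1}\sin(a\theta)}{a^2}\right]_0^{\pi}-\frac{m(m-1)}{a^2}\,I_{m-2}(a).
\]
In case (a) only the first bracket term survives, and it equals $-(-1)^n\pi^m/a$. In case (b) only the second survives, and it equals $(-1)^{n-1}m\pi^{m-1}/a^2$.

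Next I unroll the recurrence down to $I_1(a)$, which I compute directly. In case (a), $I_1(n)=-(-1)^n\pi/n$. In case (b), $I_1(a)=\sin(a\pi)/a^2=(-1)^{n-1}/a^2$, which is just the $m=1$ instance of the same boundary formula. Unrolling produces products $m(m-1)\cdots$ divided by $m!/(2l-1)!$ in case (a), or by $m!/(2l-2)!$ in case (b). This yields
\[
I_{2k-1}(n)=(-1)^{n+1}(2k-1)!\sum_{l=1}^{k}(-1)^{k-l}\frac{\pi^{2l-1}}{(2l-1)!\,n^{2k-2l+1}},
\]
and analogously, in case (b),
\[
I_{2k-1}(a)=(-1)^{n+1}(2k-1)!\sum_{l=1}^{k}(-1)^{k-l}\frac{\pi^{2l-2}}{(2l-2)!\,a^{2k-2l+2}}.
\]
A clean way to certify both formulas is induction on $k$ using the recurrence. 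Equivalently, one can read them off from the finite Taylor-type antiderivative $\int\theta^m e^{ia\theta}d\theta=e^{ia\theta}\sum_j (-1)^j m!\,\theta^{m-j}/\bigl((m-j)!\,(ia)^{j+1}\bigr)$, taking the imaginary part.

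Finally I substitute into the formulas of Theorem~\ref{th:Wkn-infty-n2} and multiply by the prefactor $(-1)^{k+1}\,2a/((2k-1)!\,\pi)$. The factorial $(2k-1)!$ cancels, one power of $a$ cancels, and the overall sign becomes $(-1)^{k+1}(-1)^{n+1}(-1)^{k-l}=(-1)^{n+l}$.
\begin{itemize}
\item In case (a) this gives $2(-1)^n\sum_l(-1)^l\pi^{2l-2}/\bigl((2l-1)!\,n^{2k-2l}\bigr)$, which is the claimed $\frac{2(-1)^n}{\pi^2}\sum_l(-1)^l\pi^{2l}/\bigl((2l-1)!\,n^{2k-2l}\bigr)$.
\item In case (b) this gives $\frac{2(-1)^n}{\pi}\sum_l(-1)^l\pi^{2l-2}/\bigl((2l-2)!\,a^{2k-2l+1}\bigr)$, as stated.
\end{itemize}

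The only real obstacle is bookkeeping: tracking the signs $(-1)^{k-l}$ and $(-1)^n$, and the factorial ratios, through the unrolled recurrence. I would settle these by checking $k=1$ and $k=2$ explicitly before writing the induction.
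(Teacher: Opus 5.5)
Your proposal is correct and follows the paper's own route: repeated integration by parts on the integral forms of Theorem~\ref{th:Wkn-infty-n2}, with the boundary terms evaluated via $\cos(n\pi)=(-1)^n$, $\sin(n\pi)=0$ and $\sin((n-\tfrac12)\pi)=(-1)^{n-1}$. The paper leaves the details to the reader, and the ones you supply all check out: the two-step recurrence, the closed forms for $I_{2k-1}$, and the final sign $(-1)^{n+l}$. They also reproduce the $k=1$ values $2(-1)^{n+1}$ and $4(-1)^{n+1}/(\pi(2n-1))$ of Table~\ref{tab:w1n-values}.
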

\begin{proof}
The proof follows by repeated integration by parts applied to the 
integral representations established in Theorem~\ref{th:Wkn-infty-n2}.
Each integration by parts reduces the power of $\theta$ by one and
produces a boundary term that either vanishes or simplifies to a known
value  using the elementary identities \(\cos(n\pi) = (-1)^n\),
\(\sin(n\pi) = 0\), and  \(\sin\left((n-\tfrac12)\pi\right) = (-1)^{n-1}\).
The details are straightforward and left to the reader.
\end{proof}
The integral representations of Theorem~\ref{th:Wkn-infty-n2} and the
closed-form sums of Corollary~\ref{cor:Wkn-closed-forms} are summarized in
Table~\ref{tab:wkn-representations} below.
\begin{table}[htb!]
\centering
\[
\begin{array}{c|c|c|}
u_n 
& w_{kn}^{[\infty]}\text{ (sum)} 
& w_{kn}^{[\infty]}\text{ (integral)} \\
\hline
n^2 
& \dfrac{2(-1)^{n}}{\pi^2}\displaystyle\sum_{l=1}^{k}\dfrac{(-1)^{l}\pi^{2l}}{(2l-1)!\,n^{2(k-l)}} 
& (-1)^{k+1}\dfrac{2n}{(2k-1)!\pi}\displaystyle\int_0^\pi\!\theta^{2k-1}\sin(n\theta)\,d\theta \\[16pt]
(n-\tfrac12)^2 
& \dfrac{2(-1)^{n}}{\pi}\displaystyle\sum_{l=1}^{k}\dfrac{(-1)^{l}\pi^{2l-2}}{(2l-2)!\,(n-\tfrac12)^{2k-2l+1}} 
& (-1)^{k+1}\dfrac{2(n-\tfrac12)}{(2k-1)!\pi}\displaystyle\int_0^\pi\!\theta^{2k-1}\sin\left((n-\tfrac12)\theta\right)d\theta
\end{array}
\]
\caption{Representations of the asymptotic coefficients
\(w_{kn}^{[\infty]}\) for the two node sequences: the closed-form sum
(Corollary~\ref{cor:Wkn-closed-forms}) and the integral form
(Theorem~\ref{th:Wkn-infty-n2}).}
\label{tab:wkn-representations}
\end{table}

We now specialize each case to \(k = 1\), as these values will
be needed  in the next section. The results are summarized in
Table~\ref{tab:w1n-values}.
Observe that the limiting values for \(k=1\) can be obtained directly
from the finite \(N\)-formulas via Stirling's formula, without passing
through the integral representations of Theorem~\ref{th:Wkn-infty-n2}.
For \(u_n = n^2\), Stirling's formula gives \(\binom{2N}{N+n}/\binom{2N}{N} \to 1\)
as \(N \to \infty\) for fixed \(n\). For \(u_n = (n-\tfrac12)^2\), Stirling's
formula gives for fixed \(n\),
\[
\binom{2N}{N} \sim \frac{4^N}{\sqrt{\pi N}}, \qquad 
\binom{2N-1}{N-n} \sim \frac{4^N}{2\sqrt{\pi N}}
\qquad (N \to \infty).
\]
Therefore,
\[
w_{1n}^{[N]} \sim 
\frac{(-1)^{n-1} \, 2N}{4^{2N-1}(2n-1)} \cdot 
\frac{4^N}{\sqrt{\pi N}} \cdot \frac{4^N}{2\sqrt{\pi N}}
= \frac{4(-1)^{n+1}}{\pi(2n-1)}.
\]

\begin{table}[ht]
\centering
\[
\begin{array}{c|c|c|}
u_n & w_{1n}^{[N]} & w_{1n}^{[\infty]} \\
\hline
n^2 & 
2(-1)^{n+1} \cdot \dfrac{\binom{2N}{N+n}}{\binom{2N}{N}} & 
2(-1)^{n+1} \\
(n-\tfrac12)^2 & 
\dfrac{(-1)^{n-1}\,2N}{4^{2N-1}(2n-1)}
   \binom{2N}{N}\binom{2N-1}{N-n} & 
\dfrac{4(-1)^{n+1}}{\pi(2n-1)}
\end{array}
\]
\caption{Explicit expressions for the first-row coefficients 
\(w_{1n}^{[N]}\) and their limits \(w_{1n}^{[\infty]}\) for the two 
node sequences.}
\label{tab:w1n-values}
\end{table}

\section{A Ratio of Determinants and Its Asymptotic Behavior}
\label{sec:ratio-determinant}

In this section we identify the limit of the ratio~\eqref{eq:ratio}
defined in the introduction for the two quadratic node sequences
\(u_n=n^2\) and \(u_n=(n-\tfrac12)^2\).

It is immediate from the definition that \(R_N(0;\bu)=1\),
and that \(R_N(s;\bu)=0\) for \(s=1,\ldots,(N-1)\). Moreover, if we denote
\[
\alpha^{[N]}_k(s) \eqdef \sum_{n=1}^N w_{kn}^{[N]} u_n^{-s},\qquad k=1,\ldots,N,
\]
we have the following evaluation:
\begin{equation*}
R_N(s;\bu) = \Det{W_{N}(\bu) V_N^{(s)}(\bu)}
= \det{
\begin{pmatrix}
 \alpha^{[N]}_1(s) & 0 & 0 &\ldots & 0 \\
 \alpha^{[N]}_2(s) & 1 & 0 &\ldots & 0 \\
\vdots             & \vdots & \vdots & \ddots  & \vdots \\
\alpha^{[N]}_N(s)  & 0 & 0 &\ldots & 1
\end{pmatrix}
}
 = \sum_{n=1}^N  \left(\prod_{\substack {1\le j\le N\\j\ne n}} \frac{u_j}{u_j-u_n}\right)
 \frac{1}{u_n^{s}}.
\end{equation*}
For the last equality we use the expression of \(w_{1n}^{[N]}\) given in
Appendix~\ref{app:inverseVandermonde}.

Recall the definitions of the Dirichlet eta and beta functions:
\[
\eta(s) \eqdef \sum_{n=1}^\infty \frac{(-1)^{n+1}}{n^s},
\qquad
\beta(s) \eqdef \sum_{m=0}^\infty \frac{(-1)^m}{(2m+1)^s}
= 2^{-s}\sum_{m=1}^\infty \frac{(-1)^{m+1}}{(m-\tfrac12)^s}.
\]
We can identify the limits of the series \(\alpha_k^{[N]}(s)\)
using the limiting weights \(w_{kn}^{[\infty]}\) computed in
Section~\ref{sec:InverseVandermonde} and made explicit in
Corollary~\ref{cor:Wkn-closed-forms}
(the particular case $k=1$ is in Table~\ref{tab:w1n-values}).
These series converge for \(\Re(s)>0\), and are collected in
Table~\ref{tab:S-infty}.
The values obtained by replacing \(w_{kn}^{[N]}\)
with \(w_{kn}^{[\infty]}\) in the definition of \(\alpha_k^{[N]}(s)\),
are the starting point of the shift representation theory developed
in Section~\ref{sec:shift-structure}.
\begin{table}[htb!]
\centering
\[
\begin{array}{c|c|c|}
u_n & \alpha_1^{[\infty]}(s) & \alpha_k^{[\infty]}(s) \\
\hline
n^2 & 2\,\eta(2s) & 
2 \displaystyle\sum_{l=1}^{k} (-1)^{l+1} \dfrac{\pi^{2l-2}}{(2l-1)!} \,
\eta(2s+2k-2l) \\
(n-\tfrac12)^2 &  \dfrac{2^{2s+2}}{\pi}\,\beta(2s+1) & 
\dfrac{2^{2s+1}}{\pi}\displaystyle\sum_{l=1}^{k} (-1)^{l+1} \dfrac{\pi^{2l-2}}{(2l-2)!} \, 2^{\,2k-2l+1} \, \beta(2s+2k-2l+1)
\end{array}
\]
\caption{Limiting functions \(\alpha_k^{[\infty]}(s)\) for
\(u_n=n^2\) and \(u_n = (n-\tfrac12)^2\).}
\label{tab:S-infty}
\end{table}

Moreover, for \(k=1\) the convergence of \(R_N(s;\bu)=\alpha_1^{[N]}(s)\)
to its limit can be shown to be uniform on compact subsets of \(\C\),
for both node sequences.
It follows from Lemmas~4 and~5 of \cite{biane2001probability}, applied as
in the proof of Theorem~3 of the same reference. We sketch the arguments.
For $\nu\in\{0,\tfrac12\}$, set $a_n^{(0)}=n^{-2}$,
$a_n^{(1/2)}=(n-\tfrac12)^{-2}$, and define, as in
\cite{biane2001probability},
\[
\Pi_{n,N}^{(\nu)}\eqdef\prod_{\substack{1\le j\le N\\ j\ne n}}
\Bigl(1-\frac{a_n^{(\nu)}}{a_j^{(\nu)}}\Bigr).
\]
Some algebra gives (see Table~\ref{tab:w1n-values}),
\[
\frac{1}{\Pi_{n,N}^{(0)}}
=2(-1)^{n+1}\frac{\binom{2N}{N-n}}{\binom{2N}{N}},
\qquad\text{and}\qquad
\frac{1}{\Pi_{n,N}^{(1/2)}}
=\frac{(-1)^{n-1}2N}{4^{2N-1}(2n-1)} \binom{2N}{N}\binom{2N-1}{N-n},
\]
so that in both cases $w_{1n}^{[N]}=1/\Pi_{n,N}^{(\nu)}$.

Let $X_N^{(\nu)}=\sum_{n=1}^N a_n^{(\nu)}\epsilon_n$ with independent
standard exponential $\epsilon_n\sim\mathrm{Exp}(1)$.
Lemma~5 of \cite{biane2001probability} gives
\[
\mathbb{E}\bigl[(X_N^{(\nu)})^s\bigr]
=\Gamma(s+1)\,\alpha_1^{[N]}(s),
\]
and Lemma~4 gives that $\mathbb{E}[(X_N^{(\nu)})^s]$ converges as
$N\to\infty$, uniformly on compact subsets of \(\C\), to an entire
function. Since \(1/\Gamma(s+1)\) is entire, and therefore bounded on
compact sets, the quotient
\[
\alpha_1^{[N]}(s)
=
\frac{\mathbb{E}[(X_N^{(\nu)})^s]}{\Gamma(s+1)}
\]
also converges uniformly on compact subsets of \(\C\). In the half-plane
$\Re(s)>0$, the limit is identified with
$\sum_{n=1}^{\infty}w_{1n}^{[\infty]}u_n^{-s}$, i.e. $2\eta(2s)$ for
$u_n=n^2$ and $\frac{2^{2s+2}}{\pi}\beta(2s+1)$ for $u_n=(n-\tfrac12)^2$. Both sides being entire in $s$, the identity
extends to all of $\C$ by analytic continuation.

\section{Shift Representation of the Dirichlet Eta and Beta Functions}
\label{sec:shift-structure}

Let \(s\in\C\). In this section we investigate the properties
of the series \(\alpha_k^{[\infty]}\), \(k\geq1\), for the two
choices \(u_n=n^2\) and \(u_n=(n-\tfrac12)^2\).
To lighten the notation we write
\[
\alpha_0(k,s)\quad\text{when }u_n=n^2,
\qquad
\alpha_{\frac12}(k,s)\quad\text{when }u_n=\left(n-\tfrac12\right)^2,
\]
and we drop the overall factor \(2\) which appears in both families.
We first derive a recurrence and a generating function
(Subsection~\ref{subsec:recurrence}), then invert the recurrence to
express the first row \(\alpha_\nu(1,s+n)\) in terms of the column
\(\alpha_\nu(k+1,s)\) (Subsection~\ref{subsec:inversion-theorem}).
Substituting the integral forms of the \(\alpha_\nu\)'s and exchanging 
um and integral gives the result of Corollary~\ref{cor:eta-beta-clausen}.
Finally we
state some miscellaneous related results in Subsection~\ref{subsec:miscellaneous}.

Using the integral representations \eqref{eq:Wkn-infty-0},
together with the definitions of the generalized Clausen functions
from Appendix~\ref{app:Clausen}, we obtain
\begin{multline}\label{eq:alphaks_0}
\alpha_0(k,s) = \frac{(-1)^{k+1} }{\pi}\sum_{n=1}^{\infty} \frac{n^{-2s+1}}{(2k-1)!}
\int_0^\pi \theta^{2k-1} \sin(n\theta)\,d\theta\\
= \frac{(-1)^{k+1}}{(2k-1)!}\,\frac{1}{\pi}
\int_{0}^{\pi}\theta^{2k-1}\,\Sl_{2s-1}(\theta)\,d\theta,
\quad k\in\N^\star,\quad \RE(s)\ge\tfrac12,
\end{multline}
the interchange of sum and integral being justified for \(\RE(s)>1\)
and extended to $\RE(s)\geq1/2$ as the integral is defined in this range.
Similarly, using the integral
representations \eqref{eq:Wkn-infty-12} together with the definitions of the
generalized half-Clausen functions from Appendix~\ref{app:Half-Clausen},
we have
\begin{multline}\label{eq:alphaks_12}
\alpha_{\frac12}(k,s)
=  \frac{(-1)^{k+1} }{\pi}\sum_{n=1}^{\infty} \frac{(n-\tfrac12)^{-2s+1}}{(2k-1)!}
\int_0^{\pi} \theta^{2k-1} \sin\left((n-\tfrac12)\theta\right) \, d\theta\\
= \frac{(-1)^{k+1}}{(2k-1)!}\,\frac{1}{\pi}
\int_{0}^{\pi}\theta^{2k-1}\,\Slh_{2s-1}(\theta)\,d\theta,
\quad k\in\N^\star,\quad \RE(s)\ge\tfrac12.
\end{multline}
These integral forms of the \(\alpha_\nu\)'s are the starting point for all the
evaluations in this section. Before going to the main
results of this section, we record some particular values, mainly
when \(k=1\) for general $s$, and for \(s=\tfrac12\) and \(s=1\) with
$k$ arbitrary.
\begin{itemize}
\item When \(k=1\) we have (see Table~\ref{tab:S-infty}),
\[
\alpha_0(1,s)=\eta(2s),\qquad
\alpha_{\frac12}(1,s)=\frac{2^{2s+1}}{\pi}\,\beta(2s+1).
\]
\item At \(s=\tfrac12\) the Clausen functions reduce to elementary
trigonometric functions and we get
\begin{equation}\label{eq:alpha-moments-correct}
\alpha_0\!\left(k,\tfrac12\right)
=\frac{(-1)^{k+1}}{(2k-1)!2\pi}\,\int_0^\pi \theta^{2k-1} \cot\frac{\theta}{2}\,d\theta,
\qquad
\alpha_{\frac12}\!\left(k,\tfrac12\right)
=\frac{(-1)^{k+1}}{(2k-1)!2\pi}\,\int_0^\pi \theta^{2k-1} \csc\frac{\theta}{2}\,d\theta.
\end{equation}
These integrals are classical; they can be evaluated in
terms of zeta and beta values, respectively, by expanding the
trigonometric kernels (see \cite[entries 3.747 and 3.748]{gradshteyn2014}).

\item At \(s=1\), \(\Sl_1(\theta)=\frac{\pi-\theta}{2}\) and
\(\Slh_1(\theta)=\frac{\pi}{2}\) for \(\theta\in(0,\pi)\), so the integrals
in \eqref{eq:alphaks_0}--\eqref{eq:alphaks_12} are elementary. One obtains
\[
\alpha_0(k,1)=\frac{(-1)^{k+1}\pi^{2k}}{2(2k+1)!},
\qquad
\alpha_{\frac12}(k,1)=\frac{(-1)^{k+1}\pi^{2k}}{2(2k)!}.
\]
\end{itemize}

\subsection{Recurrence and Generating Function}
\label{subsec:recurrence}
Integrating by parts twice in \eqref{eq:alphaks_0} and \eqref{eq:alphaks_12}
yields the unified recurrence
\begin{equation}\label{eq:alpha-rec-unified}
\alpha_\nu(k,s)
=\frac{(-1)^{k-1}\pi^{2k-2}}{(2k-1-2\nu)!}\,\alpha_\nu(1,s)
+\alpha_\nu(k-1,s+1),
\qquad k\ge2,\;\RE(s)\ge1,
\end{equation}
where $\nu\in\{0,\tfrac12\}$ and $(2k-1-2\nu)!$ is $(2k-1)!$
for $\nu=0$, and $(2k-2)!$ for $\nu=\tfrac12$.
For $\nu=0$ the boundary term arises from the first primitive
$-\Cl_{2s}(\theta)$ evaluated at $\theta=\pi$, while for
$\nu=\tfrac12$ it arises from the second primitive $\Slh_{2s+1}(\pi)$;
the remaining integral is proportional to $\alpha_{\nu}(k-1,s+1)$
in both cases. Unfolding \eqref{eq:alpha-rec-unified} gives
\begin{equation}\label{eq:alpha-closed-unified}
\alpha_\nu(k,s)=\sum_{j=0}^{k-1}
\frac{(-1)^{k-j-1}\,\pi^{2k-2j-2}}{\left(2k-2j-1-2\nu\right)!}\;
\alpha_\nu(1,s+j).
\end{equation}
This expresses every row $k$ as a finite linear combination of the first
row at shifted values of $s$, with explicit polynomial coefficients in $\pi^2$.
Looking closely at these coefficients, we observe that they involve
the Taylor coefficients of $\frac{\sin(\pi t)}{\pi t}$
when $\nu = 0$, and $\cos(\pi t)$ when $\nu=\tfrac12$.
This leads to the following proposition.
\begin{proposition}\label{prop:generating functions}
Let $s\in\C$ and define the ordinary generating functions
\[
G_{s}^{(\nu)}(t) \eqdef \sum_{k=0}^{\infty}\alpha_\nu(k+1,s)\,t^{2k},
\qquad \nu\in\{0,\tfrac12\}.
\]
Then
\begin{equation}\label{eq:generating functions}
G_{s}^{(0)}(t)=\frac{\sin(\pi t)}{\pi t}\,
\sum_{j=0}^{\infty}\alpha_{0}(1,s+j)\,t^{2j},
\qquad
G_{s}^{(1/2)}(t)=\cos(\pi t)\,
\sum_{j=0}^{\infty}\alpha_{\frac12}(1,s+j)\,t^{2j}.
    \end{equation}
\end{proposition}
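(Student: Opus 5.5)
The plan is to read \eqref{eq:generating functions} as a Cauchy product identity between formal power series in $t^2$, and then to check convergence so that it also holds as an identity of functions. The starting point is the unfolded recurrence \eqref{eq:alpha-closed-unified}. Shift the index to $k+1$ and put $m=k-j$:
\[
\alpha_\nu(k+1,s)=\sum_{j=0}^{k}\frac{(-1)^{m}\pi^{2m}}{(2m+1-2\nu)!}\,\alpha_\nu(1,s+j),\qquad m=k-j .
\]
Multiplying by $t^{2k}$ and summing over $k\ge0$, the right-hand side becomes the Cauchy product of the two series
\[
\sum_{m\ge0}\frac{(-1)^m\pi^{2m}}{(2m+1-2\nu)!}\,t^{2m}
\quad\text{and}\quad
\sum_{j\ge0}\alpha_\nu(1,s+j)\,t^{2j}.
\]
The first factor is identified directly. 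For $\nu=0$ it is $\sum_m(-1)^m(\pi t)^{2m}/(2m+1)!=\sin(\pi t)/(\pi t)$. For $\nu=\tfrac12$ it is $\sum_m(-1)^m(\pi t)^{2m}/(2m)!=\cos(\pi t)$. This gives both formulas at the level of coefficients.

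The analytic justification comes next. The first factor is entire in $t$. For the second, $\alpha_0(1,s+j)=\eta(2s+2j)\to1$, and $\alpha_{\frac12}(1,s+j)=\frac{2^{2s+2j+1}}{\pi}\beta(2s+2j+1)$ grows like $4^{j}$. So the second series converges for $|t|<1$ when $\nu=0$ and for $|t|<\tfrac12$ when $\nu=\tfrac12$, with absolute convergence there. Mertens' theorem then justifies the Cauchy product, and $G^{(\nu)}_s$ converges on the same disc with the stated value. Since the first factor is entire, the right-hand side shows that $G^{(\nu)}_s$ is the product of an entire function with a function analytic in a disc. If one prefers, the identity can be stated purely as one between formal power series, which needs no convergence at all.

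The main obstacle is the range of validity in $s$. The recurrence \eqref{eq:alpha-rec-unified} was derived only for $\RE(s)\ge1$ through the integral forms, while the proposition is stated for all $s\in\C$. I would handle this by taking \eqref{eq:alpha-closed-unified} as valid for every $s$. The row functions $\alpha_\nu(k,s)$ are defined by the sums in Table~\ref{tab:S-infty}, which are finite linear combinations of $\eta$ or $\beta$ at shifted arguments, and these are entire. Indeed, unfolding that table expression with the coefficients from Corollary~\ref{cor:Wkn-closed-forms} reproduces \eqref{eq:alpha-closed-unified} directly, via $l\mapsto k-j$. Both sides are then entire in $s$ and agree for $\RE(s)\ge1$, so analytic continuation gives the identity for all $s$. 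The coefficient comparison above then applies for every $s$, so the argument reduces to this bookkeeping of indices and factorials, which is where I would be most careful (in particular the $2^{2k-2l+1}$ factors for $\nu=\tfrac12$).
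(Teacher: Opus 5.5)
Your proof is correct and follows the paper's own argument: substitute \eqref{eq:alpha-closed-unified}, reindex with $m=k-j$ so the double sum becomes a Cauchy product, and recognize the Taylor series of $\sin(\pi t)/(\pi t)$ for $\nu=0$ and of $\cos(\pi t)$ for $\nu=\tfrac12$. Your convergence check (radii $1$ and $\tfrac12$ in $t$) and your extension of \eqref{eq:alpha-closed-unified} to all $s$, via the entire finite combinations of Table~\ref{tab:S-infty}, which match it term by term under $l=k-j$, both hold up and make explicit two points the paper's proof leaves implicit.
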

\begin{proof}
We first prove the identity for \(\nu=0\). Substituting the unfolded
formula \eqref{eq:alpha-closed-unified} in the definition, we have for $\nu=0$,
\[
G_{s}^{(0)}(t)
= \sum_{k=0}^{\infty} \sum_{j=0}^{k} \frac{(-1)^{k-j}\,\pi^{2k-2j}}{(2k-2j+1)!}\,
\alpha_0(1,s+j) t^{2k}.
\]
The double sum runs over $0\leq j\leq k$, which is equivalent to
$j\geq0$ and $m=k-j\ge0$. Substituting $k=m+j$, we obtain
\[
G_{s}^{(0)}(t)
=\sum_{j=0}^{\infty}\alpha_0(1,s+j)\,t^{2j}
\sum_{m=0}^{\infty}
\frac{(-1)^m(\pi t)^{2m}}{(2m+1)!}.
\]
The inner sum is precisely the Taylor expansion of \(\sin(\pi t)/(\pi t)\);
hence the result follows when $\nu=0$.
For \(\nu=\tfrac12\), the same computations lead to
\[
G_{s}^{(1/2)}(t)
=\sum_{j=0}^{\infty}\alpha_{\frac12}(1,s+j)\,t^{2j}
\sum_{m=0}^{\infty}
\frac{(-1)^m(\pi t)^{2m}}{(2m)!}.
\]
The inner sum is the Taylor expansion of \(\cos(\pi t)\)
which completes the proof.
\end{proof}

\subsection{Inversion theorem}
\label{subsec:inversion-theorem}

In the previous part, we expressed the family of functions $\alpha_\nu(k,s)$
for $k\in\N^\star$ as a linear combination of the family of functions
$\alpha_\nu(1,s+j)$ for $j\ge0$. In this part, we invert this relation
and express the functions $\alpha_\nu(1,s+j)$ as integrals of the Clausen
functions against certain polynomials.
\begin{theorem}[Inversion theorem]\label{th:inv-general-final}
For any $s\in\C$ with $\RE(s)\ge1/2$, we have
\begin{equation}\label{eq:inv-general-0}
\alpha_0(1,s+n) =\sum_{k=0}^{n} (-1)^{n-k+1}
\frac{2(2^{2n-2k-1}-1)B_{2n-2k}}{(2n-2k)!}\,\pi^{2n-2k}\,\alpha_0(k+1,s),
\quad n\geq0,
\end{equation}
and
\begin{equation}\label{eq:inv-general-12}
\alpha_{\tfrac12}(1,s+n) =\sum_{k=0}^{n} (-1)^{n-k} \frac{E_{2n-2k}}{(2n-2k)!}\,\pi^{2n-2k}\,\alpha_{\tfrac12}(k+1,s), \quad n\geq 0.
\end{equation}
\end{theorem}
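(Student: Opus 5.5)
The plan is to invert the lower-triangular relation \eqref{eq:alpha-closed-unified} using the generating functions of Proposition~\ref{prop:generating functions}. Formally, $G^{(0)}_s(t)=\frac{\sin(\pi t)}{\pi t}F_s(t)$ with $F_s(t)=\sum_{j\ge0}\alpha_0(1,s+j)t^{2j}$. Hence $F_s(t)=\frac{\pi t}{\sin(\pi t)}G^{(0)}_s(t)$, and similarly $F^{(1/2)}_s(t)=\sec(\pi t)\,G^{(1/2)}_s(t)$. Comparing coefficients of $t^{2n}$ then expresses $\alpha_\nu(1,s+n)$ as a finite convolution of the $\alpha_\nu(k+1,s)$ with the Taylor coefficients of $x/\sin x$ or $\sec x$, evaluated at $x=\pi t$.

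To avoid any convergence issues, I will work purely at the level of formal power series in $t^2$, or equivalently with finite triangular matrices. Relation \eqref{eq:alpha-closed-unified} states that the vector $(\alpha_\nu(k+1,s))_{k=0}^{n}$ equals $L$ times $(\alpha_\nu(1,s+j))_{j=0}^{n}$. Here $L$ is the lower-triangular Toeplitz matrix with entries $c_{k-j}$, where $c_m=(-1)^m\pi^{2m}/(2m+1-2\nu)!$. Its diagonal is $1$, so $L$ is invertible. Its inverse is again Toeplitz, built from the coefficients $d_m$ of the reciprocal series $1/\sum_m c_m x^{2m}$. This is the Cauchy-product statement and needs no analytic input.

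One point of care: \eqref{eq:alpha-rec-unified} was stated for $\RE(s)\ge1$. Its unfolding to $\alpha_\nu(k,s)$ uses only the shifts $s+j$ with $j\ge0$. I will check that the integration by parts behind \eqref{eq:alpha-rec-unified} is valid for $\RE(s)\ge\tfrac12$, since $\Sl_{2s-1}$ and $\Slh_{2s-1}$ are integrable there and the primitives $\Cl_{2s}$ and $\Slh_{2s+1}$ are continuous at $\pi$. Failing that, I will argue by analytic continuation in $s$, because both sides are analytic where defined.

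It then remains to identify the coefficients $d_m$. For $\nu=\tfrac12$, the generating series is $\sum_m c_m t^{2m}=\cos(\pi t)$, and $\sec x=\sum_m (-1)^m E_{2m}x^{2m}/(2m)!$ with Euler numbers $E_{2m}$ (sign convention $E_2=-1$). This gives $d_m=(-1)^m E_{2m}\pi^{2m}/(2m)!$, which is exactly \eqref{eq:inv-general-12} with $m=n-k$. For $\nu=0$, I will use the classical expansion
\[
\frac{x}{\sin x}=\sum_{m\ge0}(-1)^{m+1}\frac{2(2^{2m-1}-1)B_{2m}}{(2m)!}x^{2m}.
\]
This follows from $\csc x=\cot(x/2)-\cot x$ together with $x\cot x=\sum_m (-1)^m 2^{2m}B_{2m}x^{2m}/(2m)!$. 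I will check the $m=0$ term: $(-1)\cdot2(2^{-1}-1)B_0=1$, as required. Substituting $x=\pi t$ and $m=n-k$ yields \eqref{eq:inv-general-0}.

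The main obstacle is not the algebra but the bookkeeping. I must verify the sign and normalization conventions for $B_{2m}$ and $E_{2m}$ against the stated formulas, and confirm the range of validity in $s$ discussed above. The coefficient identification itself is a routine check.
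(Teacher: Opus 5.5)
Your proposal is correct and follows essentially the paper's route: the paper multiplies the generating-function identity of Proposition~\ref{prop:generating functions} by $\pi t/\sin(\pi t)$ (resp.\ divides by $\cos(\pi t)$), inserts the Taylor series of the cosecant and secant, and compares coefficients of $t^{2n}$, exactly as you do. Your formal-power-series (triangular Toeplitz) phrasing is a slightly cleaner variant that avoids the radius-of-convergence remarks. Your concern about the range $\RE(s)\ge\tfrac12$ is harmless, since \eqref{eq:alpha-closed-unified} can also be read off directly from the explicit sums in Table~\ref{tab:S-infty}, which are valid for $\RE(s)>0$.
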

\begin{proof}
We treat the case $\nu=0$ first. In Proposition~\ref{prop:generating functions},
multiplying both sides of equation \eqref{eq:generating functions} by
$\pi t/\sin(\pi t)$ gives
\[
\frac{\pi t}{\sin(\pi t)}\sum_{k=0}^{\infty}\alpha_0(k+1,s)\,t^{2k}
=\sum_{j=0}^{\infty}\alpha_0(1,s+j)\,t^{2j}.
\]
Using the Taylor series of the cosecant function (\cite[Entry 22.26]{spiegel2009mathematical})
we obtain
\[
\left(\sum_{n=0}^{\infty}(-1)^{n+1}\,\frac{2(2^{2n-1}-1)B_{2n}}{(2n)!}\,(\pi t)^{2n} \right)
\left(\sum_{k=0}^\infty\alpha_0(k+1,s) t^{2k}\right)
= \sum_{n=0}^{\infty}\alpha_0(1,s+n)\,t^{2n},\quad |t|<1.
\]
Identifying the coefficient of $t^{2n}$ in the product gives \eqref{eq:inv-general-0}
and completes the proof.
Similarly, for $\nu=\tfrac12$, in Proposition~\ref{prop:generating functions}
dividing by $\cos(\pi t)$ on both sides gives
\[
\frac{1}{\cos(\pi t)}\sum_{k=0}^{\infty}\alpha_{\frac12}(k+1,s)\,t^{2k}
=\sum_{j=0}^{\infty}\alpha_{\frac12}(1,s+j)\,t^{2j}.
\]
Using the Taylor expansion of the secant function (\cite[Entry 22.25]{spiegel2009mathematical}),
we obtain
\[
\left(\sum_{n=0}^{\infty}(-1)^{n}\,
\frac{E_{2n}}{(2n)!}\,(\pi t)^{2n}\right)
\left(\sum_{k=0}^{\infty}\alpha_{\frac12}(k+1,s)\,t^{2k}\right)
=\sum_{n=0}^{\infty}\alpha_{\frac12}(1,s+n)\,t^{2n},\quad |t|<\tfrac12.
\]
Identifying the coefficient of \(t^{2n}\) in this product gives
\eqref{eq:inv-general-12}.
\end{proof}
From this inversion theorem, we will derive the main result of this section.
Define the polynomials
\begin{eqnarray}
P^{(0)}_{2n+1}(\theta) &=& \sum_{k=0}^{n} \binom{2n+1}{2k+1}
(1-2^{2n-2k-1})B_{2n-2k}\,\theta^{2k+1},\label{eq:Pn0-def-final} \\ 
P^{(1/2)}_{2n+1}(\theta) &=& \sum_{k=0}^{n} \binom{2n+1}{2k+1}
E_{2n-2k}\,\theta^{2k+1}.\label{eq:Pn12-def-final}
\end{eqnarray}
Interchanging the finite sum and the integral and making the change of
variable \(t=\theta/\pi\) gives
\begin{align*}
\alpha_0(1,s+n) &= \frac{2(-1)^{n}\pi^{2n}}{(2n+1)!}
\int_{0}^{\pi} P_{2n+1}^{(0)}\!\left(\frac{\theta}{\pi}\right) \Sl_{2s-1}(\theta)\,d\theta
= \frac{2(-1)^{n}\pi^{2n+1}}{(2n+1)!}
\int_{0}^{1} P_{2n+1}^{(0)}(t)\, \Sl_{2s-1}(\pi t)\,dt,
\\
\alpha_{\frac12}(1,s+n) &= \frac{(-1)^{n}\pi^{2n}}{(2n+1)!}
\int_{0}^{\pi} P_{2n+1}^{(1/2)}\!\left(\frac{\theta}{\pi}\right) \Slh_{2s-1}(\theta)\,d\theta
= \frac{(-1)^{n}\pi^{2n+1}}{(2n+1)!}
\int_{0}^{1} P_{2n+1}^{(1/2)}(t)\, \Slh_{2s-1}(\pi t)\,dt.
\end{align*}
The first few of these polynomials are given in
Table~\ref{tab:polynomials-P} together with the even polynomials.

\begin{remark*}
The polynomials \(P_{2n+1}^{(\nu)}\), defined in
equations~\eqref{eq:Pn0-def-final} and~\eqref{eq:Pn12-def-final}, are
rescaled Bernoulli and Euler polynomials. More precisely, for every
\(n\ge0\),
\begin{equation}\label{eq:P-BE}
P_{2n+1}^{(0)}(t)=2^{2n}B_{2n+1}\!\left(\frac{t+1}{2}\right),
\qquad
P_{2n+1}^{(1/2)}(t)=2^{2n+1}E_{2n+1}\!\left(\frac{t+1}{2}\right).
\end{equation}
For the Bernoulli family, the identity follows from the multiplication
theorem for Bernoulli polynomials in the form
\[
B_{2n+1}(t)=2^{2n}\left[B_{2n+1}\!\left(\frac t2\right)
+B_{2n+1}\!\left(\frac{t+1}{2}\right)\right],
\]
which gives directly
\[
B_{2n+1}(t)-2^{2n}B_{2n+1}\!\left(\frac t2\right)
=
2^{2n}B_{2n+1}\!\left(\frac{t+1}{2}\right) = P_{2n+1}^{(0)}(t).
\]
For the Euler family, the Taylor expansion of \(E_{2n+1}(x)\) about
\(x=\tfrac12\) reads\[
E_{2n+1}(x) = \sum_{j=0}^{2n+1}\binom{2n+1}{j}
E_{2n+1-j}\!\left(\frac12\right)
\left(x-\frac12\right)^{j}
= \sum_{j=0}^{2n+1}\binom{2n+1}{j}
2^{-(2n+1-j)}E_{2n+1-j}
\left(x-\frac12\right)^{j},
\]
where we used \(E_m(1/2)=2^{-m}E_m\). Since \(E_m=0\) for odd \(m\), the terms with even \(j\) vanish. Setting \(x=(t+1)/2\), so that \(x-1/2=t/2\), and writing \(j=2k+1\), we obtain
\[
E_{2n+1}\!\left(\frac{t+1}{2}\right)
= 2^{-(2n+1)}
\sum_{k=0}^{n}\binom{2n+1}{2k+1}E_{2n-2k}\,t^{2k+1}.
\]
Multiplying by \(2^{2n+1}\) and comparing with the definition of \(P_{2n+1}^{(1/2)}\) gives the identity.

\end{remark*}

As a direct consequence of Theorem~\ref{th:inv-general-final} and the
identification \eqref{eq:P-BE}, we obtain the following integral
representations for the shifted eta and beta functions.
\begin{corollary}[Integral form of the shift representation]
\label{cor:eta-beta-clausen}
For every \(s\in\C\) with \(\RE(s)\ge\tfrac12\) and every integer \(n\ge0\),
\begin{equation}\label{eq:eta-clausen}
\eta(2s+2n)=\frac{2^{2n+1}(-1)^{n}\pi^{2n+1}}{(2n+1)!}
\int_{0}^{1} B_{2n+1}\!\left(\frac{t+1}{2}\right) \Sl_{2s-1}(\pi t)\,dt,
\end{equation}
and
\begin{equation}\label{eq:beta-clausen}
\beta(2s+2n+1)=\frac{(-1)^{n}\pi^{2n+2}}{(2n+1)!}
\int_{0}^{1} E_{2n+1}\!\left(\frac{t+1}{2}\right) \frac{\Slh_{2s-1}(\pi t)}{2^{2s}}\,dt.
\end{equation}
\end{corollary}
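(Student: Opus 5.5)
The plan is to combine three ingredients: the inversion theorem (Theorem~\ref{th:inv-general-final}), the integral forms \eqref{eq:alphaks_0}--\eqref{eq:alphaks_12} of the \(\alpha_\nu(k+1,s)\), and the identification \eqref{eq:P-BE}. First, in \eqref{eq:inv-general-0} I substitute
\[
\alpha_0(k+1,s)=\frac{(-1)^{k}}{(2k+1)!}\frac{1}{\pi}\int_0^\pi\theta^{2k+1}\Sl_{2s-1}(\theta)\,d\theta .
\]
Since the sum over \(k\) is finite, it can be moved inside the integral. This gives
\[
\alpha_0(1,s+n)=\frac1\pi\int_0^\pi Q_n(\theta)\Sl_{2s-1}(\theta)\,d\theta ,
\]
where \(Q_n\) is an odd polynomial.

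Next I collect the coefficients of \(Q_n\). The sign is \((-1)^{n-k+1}(-1)^k=(-1)^{n+1}\). Combined with \(2(2^{2n-2k-1}-1)\), this gives \((-1)^n\,2(1-2^{2n-2k-1})\). The factorials are \(\frac{1}{(2n-2k)!(2k+1)!}=\frac{1}{(2n+1)!}\binom{2n+1}{2k+1}\). Writing \(\pi^{2n-2k}\theta^{2k+1}=\pi^{2n+1}(\theta/\pi)^{2k+1}\), I expect
\[
Q_n(\theta)=\frac{2(-1)^n\pi^{2n+1}}{(2n+1)!}\,P^{(0)}_{2n+1}(\theta/\pi).
\]
After the substitution \(\theta=\pi t\), this should reproduce the displayed formula for \(\alpha_0(1,s+n)\) preceding the Remark, up to bookkeeping of the powers of \(\pi\). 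I will recheck that bookkeeping carefully, because the Remark's display and the corollary differ by one factor of \(\pi\) from the \(d\theta\to\pi\,dt\) change. Then \(\alpha_0(1,s+n)=\eta(2s+2n)\) holds by the first bullet after \eqref{eq:alphaks_12}, valid at argument \(s+n\). Replacing \(P^{(0)}_{2n+1}(t)\) by \(2^{2n}B_{2n+1}((t+1)/2)\) from \eqref{eq:P-BE} yields the prefactor \(2^{2n+1}(-1)^n\pi^{2n+1}/(2n+1)!\) of \eqref{eq:eta-clausen}.

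The beta case is the same computation applied to \eqref{eq:inv-general-12} and \eqref{eq:alphaks_12}. Here the sign is \((-1)^{n-k}(-1)^k=(-1)^n\), the coefficient is \(E_{2n-2k}\), and the result is the polynomial \(P^{(1/2)}_{2n+1}\). On the left, I use \(\alpha_{\frac12}(1,s+n)=\frac{2^{2s+2n+1}}{\pi}\beta(2s+2n+1)\). On the right, I use \(P^{(1/2)}_{2n+1}(t)=2^{2n+1}E_{2n+1}((t+1)/2)\). Dividing by \(2^{2s+2n+1}/\pi\), the factor \(2^{2n+1}\) cancels and \(2^{-2s}\) remains. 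An extra \(\pi\) appears, which accounts for the \(\pi^{2n+2}\) in \eqref{eq:beta-clausen}.

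The only genuine analytic point is validity on the whole range \(\RE(s)\ge\tfrac12\). Theorem~\ref{th:inv-general-final} is stated for \(\RE(s)\ge\tfrac12\), and the integral forms \eqref{eq:alphaks_0}--\eqref{eq:alphaks_12} are asserted on the same range, so the combination is legitimate there. However, the recurrence behind the inversion was derived only for \(\RE(s)\ge1\). If that gap needs closing, I would prove both sides of \eqref{eq:eta-clausen} and \eqref{eq:beta-clausen} for \(\RE(s)>1\), where all interchanges are justified. I would then extend by analytic continuation in \(s\) to \(\RE(s)>\tfrac12\), and by continuity to the boundary line. The left sides are entire in \(s\), and the right sides are analytic because \(\Sl_{2s-1}(\pi t)\) and \(\Slh_{2s-1}(\pi t)\) are integrable against polynomials there. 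I expect this continuation step, together with the factor-of-\(\pi\) bookkeeping, to be the main obstacle; the rest is coefficient matching.
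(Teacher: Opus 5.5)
Your proposal is correct and is the paper's own argument: substitute \eqref{eq:alphaks_0}--\eqref{eq:alphaks_12} into Theorem~\ref{th:inv-general-final}, move the finite sum inside the integral to recognize $P^{(\nu)}_{2n+1}(\theta/\pi)$, put $\theta=\pi t$, and finish with \eqref{eq:P-BE} and the $k=1$ identifications; your sign, factorial and power-of-$2$ bookkeeping is right, and the $\pi^{2n}$ versus $\pi^{2n+1}$ in the paper's two displays is exactly the $d\theta=\pi\,dt$ factor, so no factor of $\pi$ is off. Your fallback of proving the identities for $\RE(s)>1$ and extending by analytic continuation is a reasonable tightening, since the paper only asserts that the recurrence and the integral forms extend to $\RE(s)\ge\tfrac12$.
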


\begin{remark*}
The corollary is stated for \(\RE(s)\ge\tfrac12\), but the shift
representation actually holds for the wider range \(\RE(s)>0\).
The special case $s=\tfrac{1}{2}$ is discussed below.
The special case $\RE(s)=\tfrac{1}{2}$  with $\Im(s)\neq0$
and the extension to the strip $0<\RE(s)<\tfrac12$ are discussed in
Section~\ref{sec:unified-kernel} in a wider setup.
\end{remark*}

\paragraph{Consistency check} We verify that the general shift representations of
Corollary~\ref{cor:eta-beta-clausen} reduce to the classical formulas
for the eta values at odd integers and the beta values at even integers.
\begin{proposition}[Classical odd/even integer case]
\label{prop:odd-integer-equivalence}
For every integer \(k\ge1\), the integral form of
Corollary~\ref{cor:eta-beta-clausen} at \(s=\tfrac12\) reduce to the
classical formulas
\[
\eta(2k+1)=(-1)^{k}\frac{(1-2^{2k})\pi^{2k+1}}{(2k+1)!}
\int_0^1 B_{2k+1}(t)\cot(\pi t)\,dt,
\]
and
\[
\beta(2k)=(-1)^k\frac{\pi^{2k}}{4(2k-1)!}
\int_0^1 E_{2k-1}(t)\sec(\pi t)\,dt.
\]
\end{proposition}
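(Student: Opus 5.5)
Set \(s=\tfrac12\) and \(n=k-1\) in Corollary~\ref{cor:eta-beta-clausen}, insert the elementary closed forms of the kernels at this point, and then transform the resulting integrals over \([0,1]\) into the classical ones by a change of variable and the symmetries of Bernoulli and Euler polynomials. The kernels come from Appendices~\ref{app:Clausen} and~\ref{app:Half-Clausen}, as already used in \eqref{eq:alpha-moments-correct}:
\[
\Sl_0(\theta)=\tfrac12\cot\tfrac{\theta}{2},
\qquad
\Slh_0(\theta)=\tfrac12\csc\tfrac{\theta}{2},
\qquad \theta\in(0,\pi),
\]
understood in the Abel/distributional sense. With \(2s+2n=2k\) and \(2s+2n+1=2k+1\), the corollary gives
\[
\eta(2k)=\frac{2^{2k-1}(-1)^{k-1}\pi^{2k-1}}{(2k-1)!}\int_0^1 B_{2k-1}\!\left(\tfrac{t+1}{2}\right)\cot\tfrac{\pi t}{2}\,dt .
\]
This is an even value of \(\eta\), not an odd one. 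The same shift happens for \(\beta\): we obtain \(\beta(2k+1)\) rather than \(\beta(2k)\).

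\textbf{Bridging the parity mismatch.} This is the main obstacle. Since \(\eta(2s+2n)\) at \(s=\tfrac12\) only reaches even arguments, the odd values in the statement cannot come from literally substituting \(s=\tfrac12\). I would first look for the intended reading: the remark preceding the proposition says the representation extends to \(\RE(s)>0\), so \(s=\tfrac12\) could play the role of a base point for the companion (cosine) kernels, i.e. the proposition could be a reformulation at the level of the \(\alpha_\nu(k,\tfrac12)\) of \eqref{eq:alpha-moments-correct}. My first attempt would therefore run as follows.
\begin{enumerate}
\item Start from \eqref{eq:alpha-moments-correct}.
\item Use the Bernoulli/Euler expansions of \(\cot\) and \(\csc\), as in Gradshteyn--Ryzhik 3.747--3.748, to identify these moments with \(\zeta\)/\(\eta\) values at odd integers and \(\beta\) values at even integers.
\item Recombine them through Theorem~\ref{th:inv-general-final}.
\item Check whether the resulting polynomial integrals collapse to the displayed formulas.
\end{enumerate}
I expect this step to be where most care is needed, because the target formulas must be matched exactly.

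\textbf{Reducing the integrals to the classical form.} Once the right integral identity is in hand, substitute \(x=(t+1)/2\), which maps \([0,1]\) onto \([\tfrac12,1]\) and turns \(\cot\tfrac{\pi t}{2}\) into \(-\tan(\pi x)\cdot(\ldots)\). Then use the reflection rules
\[
B_m(1-x)=(-1)^mB_m(x),\qquad E_m(1-x)=(-1)^mE_m(x),
\]
together with the symmetry of \(\cot(\pi x)\) and \(\sec(\pi x)\) about \(x=\tfrac12\), to fold the interval \([\tfrac12,1]\) onto \([0,1]\). Finally, the duplication formula \(B_m(x)=2^{m-1}[B_m(x/2)+B_m((x+1)/2)]\) and its Euler analogue rewrite \(\cot\tfrac{\pi t}{2}\) as a combination of \(\cot(\pi t)\), since \(\cot\tfrac{u}{2}=\cot u+\csc u\). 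This step produces the factor \((1-2^{2k})\). The constants \((-1)^k\) and \(\tfrac14\) then follow from bookkeeping of the prefactors \(2^{2n+1}\) and \(2^{-2s}=\tfrac12\).

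\textbf{Convergence.} The integrals converge at the endpoints because \(B_{2k+1}(0)=B_{2k+1}(1)=0\) for \(k\ge1\), which cancels the pole of \(\cot(\pi t)\). For the Euler case, \(E_{2k-1}(\tfrac12)=0\) for \(k\ge1\), so \(E_{2k-1}(t)\) vanishes at \(t=\tfrac12\) and cancels the singularity of \(\sec(\pi t)\).
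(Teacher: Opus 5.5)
\textbf{The parity mismatch does not exist; it comes from an arithmetic slip.} At \(s=\tfrac12\) one has \(2s=1\). So \eqref{eq:eta-clausen} produces \(\eta(2n+1)\), an \emph{odd} argument, and \eqref{eq:beta-clausen} produces \(\beta(2n+2)\), an \emph{even} argument. Taking \(n=k\) in the eta formula and \(n=k-1\) in the beta formula gives exactly \(\eta(2k+1)\) and \(\beta(2k)\), which is what the paper does. Your line ``\(2s+2n=2k\)'' with \(n=k-1\) silently uses \(2s=2\). The left side of your displayed identity should be \(\eta(2k-1)\), and the display also drops the factor \(\tfrac12\) of \(\Sl_0(\theta)=\tfrac12\cot\tfrac{\theta}{2}\). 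The detour through \eqref{eq:alpha-moments-correct}, Gradshteyn--Ryzhik and Theorem~\ref{th:inv-general-final} is therefore unnecessary. It is also not an argument: its conclusion (``check whether the resulting polynomial integrals collapse'') is never established. As it stands, the proposal does not prove the statement.

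\textbf{The reduction step.} With the indices corrected, only the reduction remains, and there your sketch names the wrong identity for the eta case. After \(u=(t+1)/2\) one has \(\cot\tfrac{\pi t}{2}=-\tan(\pi u)\). Since \(B_{2k+1}(u)\tan(\pi u)\) is symmetric about \(u=\tfrac12\),
\(\int_0^1 B_{2k+1}(\tfrac{t+1}{2})\cot\tfrac{\pi t}{2}\,dt=-\int_0^1 B_{2k+1}(u)\tan(\pi u)\,du\).
The factor \(1-2^{2k}\) then comes from \(\tan(\pi u)=\cot(\pi u)-2\cot(2\pi u)\). The identity \(\cot\tfrac{x}{2}=\cot x+\csc x\) does not lead by itself to \(\int_0^1 B_{2k+1}(t)\cot(\pi t)\,dt\). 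The steps are:
\begin{enumerate}
\item In the \(\cot(2\pi u)\) term, put \(u=v/2\).
\item Use the period \(1\) of \(\cot(\pi v)\) to fold \([0,2]\) onto \([0,1]\).
\item Apply \(B_{2k+1}(\tfrac{v}{2})+B_{2k+1}(\tfrac{v+1}{2})=2^{-2k}B_{2k+1}(v)\).
\end{enumerate}
This yields the factor \((1-2^{2k})/2^{2k}\). Combined with the \(\tfrac12\) of \(\Sl_0\) and the prefactor \(2^{2k+1}\), the stated constant follows.

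For beta no duplication formula is needed. One uses \(\csc(\pi u-\tfrac{\pi}{2})=-\sec(\pi u)\), and the \(\tfrac12\) of \(\Slh_0\) cancels the Jacobian \(2\). Since \(E_{2k-1}(u)\sec(\pi u)\) is symmetric about \(\tfrac12\), the fold of \([\tfrac12,1]\) onto \([0,1]\) gives an overall factor \(-\tfrac12\). Together with \(2^{-2s}=\tfrac12\) and \((-1)^{k-1}\), this gives \((-1)^k\pi^{2k}/(4(2k-1)!)\).

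Your convergence remarks are correct. In addition, each piece of the \(\tan\) split converges because \(B_{2k+1}\) vanishes at \(0\), \(\tfrac12\) and \(1\).
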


\begin{proof}
At \(s=\tfrac12\), the kernels are
\(\Sl_0(\theta)=\frac12\cot(\theta/2)\) and
\(\Slh_0(\theta)=\frac12\csc(\theta/2)\)
(Appendices~\ref{app:Clausen} and~\ref{app:Half-Clausen}).

For the eta case, \eqref{eq:eta-clausen} with \(s=\tfrac12\) and \(n=k\) gives
\[
\eta(2k+1)=\frac{2^{2k+1}(-1)^{k}\pi^{2k+1}}{(2k+1)!}
\int_0^1 B_{2k+1}\!\left(\frac{t+1}{2}\right)\frac{1}{2}\cot\frac{\pi t}{2}\,dt .
\]
Substituting \(u=(t+1)/2\) and using
\(\cot(\pi(u-1/2))=-\tan(\pi u)\), together with the identity
\(\tan(\pi u)=\cot(\pi u)-2\cot(2\pi u)\) and the duplication formula
\(B_{2k+1}(u)=2^{-2k}\left[B_{2k+1}(u/2)+B_{2k+1}((u+1)/2)\right]\), the
integral reduces to
\[
\int_0^1 B_{2k+1}\!\left(\frac{t+1}{2}\right)\cot\frac{\pi t}{2}\,dt
=
\frac{1-2^{2k}}{2^{2k}}\int_0^1 B_{2k+1}(t)\cot(\pi t)\,dt .
\]
Substituting this back into \eqref{eq:eta-clausen} yields the stated formula
for \(\eta(2k+1)\).

For the beta case, the substitution \(u=(t+1)/2\) and
\(\csc(x-\pi/2)=-\sec x\) give
\[
\int_0^1 E_{2k-1}\!\left(\frac{t+1}{2}\right)\Slh_0(\pi t)\,dt
=-\int_{1/2}^1 E_{2k-1}(u)\sec(\pi u)\,du
=-\frac12\int_0^1 E_{2k-1}(u)\sec(\pi u)\,du,
\]
since \(E_m(1-t)=(-1)^mE_m(t)\) and \(\sec(\pi(1-t))=-\sec(\pi t)\), so the
integrand is symmetric about \(u=\tfrac12\). Substituting this into
\eqref{eq:beta-clausen} with \(s=\tfrac12\) and \(n=k-1\) yields the stated
formula for \(\beta(2k)\).
\end{proof}

\subsection{Complementary Results}
\label{subsec:miscellaneous}
In this subsection, we first show that an elementary integration by part transform
the families of polynomial and Kernels $(P_{2n+1}^{(\nu)},\Sl_{2s-1}^{(\nu)})$ gives
the alternative $(P_{2n}^{(\nu)},\Cl_{2s}^{(\nu)})$ for $\nu\in\set{0, 1/2}$.
In the second part we list without proof the main properties of the family of
polynomials $(P_n^{(\nu)})_{n\geq0}$ for $\nu\in\set{0,\tfrac12}$.

\paragraph{Extension to even orders}

For \(\nu\in\{0,\tfrac12\}\) and \(n\ge1\), set
\begin{equation}\label{eq:even-extension}
P_{2n}^{(\nu)}(t) \eqdef \frac{1}{2n+1}P_{2n+1}^{(\nu)\prime}(t).
\end{equation}
By construction, \(P_{2n}^{(\nu)}\) is an even polynomial of degree \(2n\).

The moments of the even polynomials against the cosine Clausen function are
obtained from the odd moments by integration by parts. Since
\(\frac{d}{d\theta}\Cl_{2s}^{(\nu)}(\theta)=-\Sl_{2s-1}^{(\nu)}(\theta)\), we have
\[
\int_0^1 P_{2n+1}^{(\nu)}(t)\,\Sl_{2s-1}^{(\nu)}(\pi t)\,dt
=
-\frac{1}{\pi}\left[P_{2n+1}^{(\nu)}(t)\,\Cl_{2s}^{(\nu)}(\pi t)\right]_0^1
+
\frac{2n+1}{\pi}
\int_0^1 P_{2n}^{(\nu)}(t)\,\Cl_{2s}^{(\nu)}(\pi t)\,dt.
\]
The boundary term vanishes in both families. At \(t=0\),
\(P_{2n+1}^{(\nu)}(0)=0\) for every \(n\ge0\) and both \(\nu\). At \(t=1\),
\(P_{2n+1}^{(0)}(1)=0\) for \(n\ge1\), while the
half-shifted cosine Clausen function satisfies \(\Clh_{2s}(\pi)=0\).
Hence the moment identity
\[
\int_0^1 P_{2n+1}^{(\nu)}(t)\,\Sl_{2s-1}^{(\nu)}(\pi t)\,dt
=
\frac{2n+1}{\pi}
\int_0^1 P_{2n}^{(\nu)}(t)\,\Cl_{2s}^{(\nu)}(\pi t)\,dt
\]
holds for \(n\ge1\) and \(\nu\in\{0,\tfrac12\}\).
Substituting into the odd moments of
Corollary~\ref{cor:eta-beta-clausen} gives the even moments
\begin{align}
\eta(2s+2n) &=\frac{2(-1)^n\pi^{2n}}{(2n)!}
\int_0^1 P_{2n}^{(0)}(t)\,\Cl_{2s}(\pi t)\,dt,
\label{eq:even-moment-eta}\\[4pt]
\beta(2s+2n+1) &=\frac{(-1)^n\pi^{2n+1}}{2^{2n}(2n)!}
\int_0^1 P_{2n}^{(1/2)}(t)\, \frac{\Clh_{2s}(\pi t)}{2^{2s+1}}\,dt.
\label{eq:even-moment-beta}
\end{align}
The first polynomials are given in the Table~\ref{tab:polynomials-P} below.
\begin{table}[htb!]
\centering
\small
\setlength{\tabcolsep}{4pt}
\[
\begin{array}{c|c|c|c|c}
n & P_{2n}^{(0)}(t) & P_{2n+1}^{(0)}(t) & P_{2n}^{(1/2)}(t) & P_{2n+1}^{(1/2)}(t) \\
\hline
0 & \tfrac12 & \tfrac12 t & 1 & t \\[6pt]
1 & -\tfrac16 + \tfrac12 t^2 & -\tfrac12 t + \tfrac12 t^3 & -1 + t^2 & -3t + t^3 \\[6pt]
2 & \tfrac{7}{30} - t^2 + \tfrac12 t^4 & \tfrac76 t - \tfrac53 t^3 + \tfrac12 t^5 
& 5 - 6t^2 + t^4 & 25t - 10t^3 + t^5 \\[6pt]
3 & -\tfrac{31}{42} + \tfrac72 t^2 - \tfrac52 t^4 + \tfrac12 t^6 & -\tfrac{31}{6}t + \tfrac{49}{6}t^3 - \tfrac72 t^5 + \tfrac12 t^7 & -61 + 75t^2 - 15t^4 + t^6 & -427t + 175t^3 - 21t^5 + t^7
\end{array}
\]
\caption{The polynomials \(P_n^{(0)}\) and \(P_n^{(1/2)}\) for \(n=0,1,2\).
The odd order are defined by \eqref{eq:Pn0-def-final} and \eqref{eq:Pn12-def-final}.
The even orders are defined by \eqref{eq:even-extension}.
}
\label{tab:polynomials-P}
\end{table}
\paragraph{Properties of the Polynomials}
\label{subsec:properties}

The polynomials \(P_n^{(0)}\) and \(P_n^{(1/2)}\), \(n\ge1\), enjoy the
following elementary properties.

\begin{enumerate}
\item \emph{Relation to Bernoulli and Euler polynomials}
(equation~\eqref{eq:P-BE})
$$
P_n^{(0)}(t) = \frac{1}{2} 2^n B_n\left(\frac{t+1}{2} \right),
\qquad
P_n^{(1/2)}(t) = 2^n E_n\left(\frac{t+1}{2} \right).
$$
\item \emph{Values at \(t=0\).}
\begin{equation*}
\begin{array}{lclr}
P_{2n+1}^{(0)}(0)=0,& \quad & P_{2n+1}^{(1/2)}(0)=0,& \quad n\ge0,\\
P_{2n}^{(0)}(0)=(1-2^{2n-1})B_{2n}, &\quad &P_{2n}^{(1/2)}(0)=E_{2n},& \quad n\ge0.
\end{array}
\end{equation*}

\item \emph{Values at \(t=1\).}
\begin{equation*}
\begin{array}{lrclr}
P_{2n+1}^{(0)}(1)=0,& n\geq1, & & 
P_{2n+1}^{(1/2)}(1)=\dfrac{2^{2n+2}\left(2^{2n+2}-1\right)}{2n+2}\,B_{2n+2}, & n\ge0,\\
P_{2n}^{(0)}(1)=2^{2n-1}B_{2n},& n\geq0, & &
  P_{2n}^{(1/2)}(1)=0, & n\ge1,
\end{array}
\end{equation*}
and the exceptional values
\[
P_1^{(0)}(1)=\frac12,\qquad P_0^{(1/2)}(1)=1.
\]

\item \emph{Derivatives at \(t=0\)}. The even polynomials satisfy
\(P_{2n}^{(0)\prime}(0)=P_{2n}^{(1/2)\prime}(0)=0\), while for the
odd polynomials,
\[
P_{2n+1}^{(0)\prime}(0)=(2n+1)\left(1-2^{2n-1}\right)B_{2n}, \qquad
P_{2n+1}^{(1/2)\prime}(0)=(2n+1)E_{2n}.
\]

\item \emph{Fourier expansions.}
\begin{align}
P_{2n+1}^{(0)}(t) &=(-1)^{n}\frac{(2n+1)!}{\pi^{2n+1}}
\sum_{k=1}^{\infty}(-1)^{k+1}\frac{\sin(k\pi t)}{k^{2n+1}},
\qquad 0\le t<1,
\label{eq:Fourier-P0}\\
P_{2n}^{(0)}(t) &= (-1)^{n}\frac{(2n)!}{\pi^{2n}}
\sum_{k=1}^{\infty}(-1)^{k+1}\frac{\cos(k\pi t)}{k^{2n}},
\qquad 0\le t\le 1,
\label{eq:Fourier-P0-even}\\
P_{2n+1}^{(1/2)}(t) &= (-1)^{n}\frac{2(2n+1)!}{\pi^{2n+2}}
\sum_{k=1}^{\infty}(-1)^{k+1}
\frac{\sin\!\left((k-\tfrac12)\pi t\right)}{(k-\tfrac12)^{2n+2}},
\qquad 0\le t\le 1,
\label{eq:Fourier-P12}\\
P_{2n}^{(1/2)}(t) &= (-1)^{n}\frac{2(2n)!}{\pi^{2n+1}}
\sum_{k=1}^{\infty}(-1)^{k+1}
\frac{\cos\!\left((k-\tfrac12)\pi t\right)}{(k-\tfrac12)^{2n+1}},
\qquad 0\le t\le 1.
\label{eq:Fourier-P12-even}
\end{align}
For \eqref{eq:Fourier-P0}, the series converges to \(0\) at \(t=1\) while
\(P_1^{(0)}(1)=\tfrac12\). For \(n\ge1\), \(P_{2n+1}^{(0)}(1)=0\)
and the expansion holds on the closed interval.

\item \emph{Generating functions.}
\begin{align}
\sum_{k=0}^{\infty}P_k^{(0)}(t)\frac{u^k}{k!}
&= \frac{u\,e^{u(t+1)}}{e^{2u}-1} = \frac{u\,e^{ut}}{2\sinh(u)} ,
\qquad |\Im u|<\pi,
\label{eq:EGF-P0}\\
\sum_{k=0}^{\infty}P_k^{(1/2)}(t)\frac{u^k}{k!}
&= \frac{2\,e^{u(t+1)}}{e^{2u}+1}
= \frac{e^{u t}}{\cosh(u)}, \qquad |\Im u|<\frac{\pi}{2}.
\label{eq:EGF-P12}
\end{align}
These are, up to the factor \(1/2\) in the first identity, the classical
generating functions of the Bernoulli and Euler polynomials given in
\cite[Entries 24.2.3 and 24.2.8]{DLMF}) by replacing $x$ by $(t+2)/2$ and
$u$ by $2t$ in the cited reference.
\end{enumerate}

\section{Unified Shift Representation}
\label{sec:unified-kernel}

The corollary of Section~\ref{sec:shift-structure} expresses the shifted values
of the Dirichlet eta and beta functions as integrals of the polynomials
\(P_k^{(\nu)}\) against the (half-)Clausen kernels \(\Sl_{z-1}^{(\nu)}\).
This section extends the construction in two directions: the replacement of
the Clausen kernels by their polylogarithmic counterparts, the analytic
continuation that widens the range of the kernel parameter.

On the symmetric interval \([-1,1]\), the polynomials $P_{k}^{(\nu)}$ have
the parity of $k$: they are odd for $k$ odd and even for $k$ even. Combining
this observation with the parity of the Clausen functions allows us to unify
the previous results. Define the polylogarithm kernels
\begin{eqnarray*}
\Liz_{z-1}(\pi t)
&\eqdef& \Li_{z-1}\!\left(e^{i\pi t}\right)
=\Cl_{z-1}(\pi t)+i\,\Sl_{z-1}(\pi t),\\
\Lih_{z-1}(\pi t)
&\eqdef & e^{i\pi t/2}\,\Phi\!\left(e^{i\pi t},z-1,\tfrac12\right)
=\Clh_{z-1}(\pi t)+i\,\Slh_{z-1}(\pi t),
\end{eqnarray*}
with $\Li$ denoting the polylogarithm function and \(\Phi\) denoting the
Lerch transcendent. The cosine part is even and the
sine part is odd, so the parity of the polynomial selects exactly one of the
two components.

The kernels are initially defined by their convergent series for
\(\RE(z)\ge1\). Their analytic continuation to the half-plane
\(\RE(z)<1\) follows from the Jonquière representation of the Lerch
transcendent \cite[Entry 9.553]{gradshteyn2014}.
We did not find a relevant reference giving this integral representation specifically,
for completeness, the proof of the following continuation theorem is included
in appendix~\ref{app:jonquiere}.

\begin{theorem}[Analytic continuation]\label{thm:continuation}
For \(0<t<1\) and every \(z\in\C\) with \(\Re(z)<1\),
\begin{equation}\label{eq:Li-continuation}
\Liz_{z-1}(\pi t) = -\frac{1}{\pi^{2-z}}
\int_0^\infty \frac{u^{1-z}}{\sinh u}\,
\cosh\left((1-t)u-\frac{i\pi z}{2}\right)\,du,
\end{equation}
or equivalently,
\begin{eqnarray}\label{eq:Cl-Sl-continuation}
\Cl_{z-1}(\pi t)&=&-\frac{\cos\left(\frac{\pi z}{2}\right)}{\pi^{2-z}}
\int_0^\infty \frac{u^{1-z}}{\sinh u}\cosh((1-t)u)\,du,
\label{eq:Clz-continuation}\\
\Sl_{z-1}(\pi t)&=&\frac{\sin\left(\frac{\pi z}{2}\right)}{\pi^{2-z}}
\int_0^\infty \frac{u^{1-z}}{\sinh u}\sinh((1-t)u)\,du.
\label{eq:Slz-continuation}
\end{eqnarray}
For \(z\in\C\) with \(\Re(z)<2\),
\begin{equation}\label{eq:Lih-continuation}
\Lih_{z-1}(\pi t) = \frac{1}{\pi^{2-z}}
\int_0^\infty \frac{u^{1-z}}{\cosh u}\,
\sinh\left(\frac{i\pi z}{2}-(1-t)u\right)\,du,
\end{equation}
or equivalently,
\begin{eqnarray}
\Clh_{z-1}(\pi t) & =&
-\frac{\cos\left(\frac{\pi z}{2}\right)}{\pi^{2-z}}
\int_0^\infty \frac{u^{1-z}}{\cosh u} \sinh((1-t)u)\,du
\label{eq:Clh-continuation}\\
\Slh_{z-1}(\pi t) & =&
\frac{\sin\left(\frac{\pi z}{2}\right)}{\pi^{2-z}}
\int_0^\infty \frac{u^{1-z}}{\cosh u} \cosh((1-t)u)
du.\label{eq:Slh-continuation}
\end{eqnarray}
All the above integrals converge absolutely.
\end{theorem}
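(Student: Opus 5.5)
The plan is to prove \eqref{eq:Li-continuation} and \eqref{eq:Lih-continuation} in the strip where both sides are given by absolutely convergent expressions. Both sides will be written as the same combination of Hurwitz zeta (respectively Lerch) values. The cosine/sine components then follow by taking even and odd parts.

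\emph{Step 1: convergence of the right-hand sides.} For \(0<t<1\) the integrand of \eqref{eq:Li-continuation} behaves like \(u^{-z}\) near \(u=0\), because \(\sinh u\sim u\). It is therefore integrable at \(0\) exactly when \(\RE(z)<1\). At infinity it is \(O(u^{1-\RE z}e^{-tu})\), since \(\cosh((1-t)u)/\sinh u\sim e^{-tu}\). This gives absolute convergence, and the integral is holomorphic in \(z\) on \(\RE(z)<1\). In the half case, \(1/\cosh u\) is bounded at \(0\), so the integrand is \(O(u^{1-\RE z})\) there and integrable for \(\RE(z)<2\). At infinity it decays like \(e^{-tu}\).

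\emph{Step 2: termwise integration.} Expand
\[
\frac{e^{(1-t)u}}{\sinh u}=2\sum_{m\ge0}e^{-(2m+t)u},
\qquad
\frac{e^{-(1-t)u}}{\sinh u}=2\sum_{m\ge0}e^{-(2m+2-t)u}.
\]
Write \(\cosh\bigl((1-t)u-\tfrac{i\pi z}{2}\bigr)=\tfrac12\bigl[e^{-i\pi z/2}e^{(1-t)u}+e^{i\pi z/2}e^{-(1-t)u}\bigr]\). Integrate term by term against \(u^{1-z}\), which is justified by Tonelli for real \(\RE(z)<1\) where all terms are positive in modulus. The right-hand side of \eqref{eq:Li-continuation} becomes
\[
-\pi^{z-2}\Gamma(2-z)\bigl[e^{-i\pi z/2}A+e^{i\pi z/2}B\bigr],
\]
where \(A=\sum_{m\ge0}(2m+t)^{z-2}=2^{z-2}\zeta(2-z,\tfrac t2)\) and \(B=2^{z-2}\zeta(2-z,1-\tfrac t2)\). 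These series converge absolutely because \(\RE(2-z)>1\).

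\emph{Step 3: identify with \(\Li_{z-1}\).} Invoke the Hurwitz/Jonquière formula \cite[Entry 9.553]{gradshteyn2014}, which realises the analytic continuation of \(\Li_s\). For \(0<x<1\) and \(s=z-1\) it reads
\[
\Li_s(e^{2\pi i x})=\frac{\Gamma(1-s)}{(2\pi)^{1-s}}\bigl[i^{1-s}\zeta(1-s,x)+i^{s-1}\zeta(1-s,1-x)\bigr].
\]
Take \(x=t/2\). Then \(i^{2-z}=-e^{-i\pi z/2}\) and \(2^{2-z}/(2\pi)^{2-z}=\pi^{z-2}\), so this is exactly the expression of Step 2. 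This step is the crux. One must check that the Jonquière formula is valid, as the continuation of the polylogarithm series, throughout \(\RE(z)<1\). I would argue this by uniqueness: both sides are holomorphic in \(z\) and the formula is classical on the region where the Hurwitz series converge.

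\emph{Step 4: the half-Clausen case.} The half case is identical. Use \(1/\cosh u=2\sum_{m\ge0}(-1)^m e^{-(2m+1)u}\) and the corresponding Lerch functional equation for \(e^{i\pi t/2}\Phi(e^{i\pi t},z-1,\tfrac12)\). That equation expresses it through the alternating sums \(\sum(-1)^m(2m+1\pm(1-t))^{z-2}\), which are Hurwitz differences at quarter-shifted arguments.

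\emph{Step 5: splitting into components.} To obtain \eqref{eq:Clz-continuation}--\eqref{eq:Slh-continuation}, apply the same argument to the conjugate point \(e^{-i\pi t}\), i.e. \(x=1-t/2\). This gives the same integral with \(\cosh\bigl((1-t)u+\tfrac{i\pi z}{2}\bigr)\). Half the sum of the two formulas gives the cosine (even) component and yields the \(\cos(\pi z/2)\) formula. Half the difference divided by \(i\) gives the sine (odd) component and yields the \(\sin(\pi z/2)\) formula.
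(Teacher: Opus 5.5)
Your proposal is correct and follows essentially the paper's route. The paper also starts from Lerch's transformation formula \cite[Entry~9.553]{gradshteyn2014} to write $\Liz_{z-1}(\pi t)$ and $\Lih_{z-1}(\pi t)$ as combinations of Hurwitz zeta (resp.\ alternating Lerch) values, then inserts the Hurwitz integral representation, which is exactly your termwise integration of the geometric expansion of $1/\sinh u$ (resp.\ $1/\cosh u$) run in the opposite direction.

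Two points are worth noting. First, the Tonelli justification of Step~2 only covers $\RE(z)<1$ in the half-shifted case, so you need the extension to $\RE(z)<2$ by analytic continuation, as the paper does. That step is left implicit in Step~4, but your holomorphy claim in Step~1 and the uniqueness principle in Step~3 supply it. Second, your explicit use of the conjugate point $x=1-t/2$ in Step~5 separates the cosine and sine parts more cleanly than the paper's appeal to even/odd identification.
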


We now state the main theorem of this section.
\begin{theorem}\label{th:unified-kernels}
Let \(z\in\C\) with \(\RE(z)>0\).
For every odd integer \(k\ge1\), identity~\eqref{eq:unified-eta} holds
for every \(z\) with \(\RE(z)>0\):
\begin{equation}\label{eq:unified-eta}
e^{-ik\pi/2}\frac{\pi^k}{k!}
\int_{-1}^1 P_k^{(0)}(t)\,\Liz_{z-1}(\pi t)\,dt
= \eta(z+k-1).
\end{equation}
For every even integer \(k\ge2\), the same identity holds provided
that \(\RE(z)\ge1\).

For every odd integer \(k\ge1\), identity~\eqref{eq:unified-beta} holds
for every \(z\) with \(\RE(z)>0\):
\begin{equation}\label{eq:unified-beta}
e^{-ik\pi/2}\frac{\pi^{k+1}}{2^{k}k!}
\int_{-1}^1 P_k^{(1/2)}(t)\,
\frac{\Lih_{z-1}(\pi t)}{2^{z+1}}\,dt
= \beta(z+k).
\end{equation}
For every even integer \(k\ge0\), the same identity holds provided
that either \(\RE(z)>1\), or \(\RE(z)=1\) and \(z\ne1\).

The rotation \(e^{-ik\pi/2}\) selects the imaginary part of the
integral for odd \(k\) and the real part for even \(k\).
\end{theorem}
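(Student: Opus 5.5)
The proof has two stages: first reduce to known identities on the range where they are already established, then extend to the full half-plane by analytic continuation.

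\emph{Parity reduction.} Since \(P_k^{(\nu)}\) has the parity of \(k\) on \([-1,1]\), the cosine component of the kernel is even in \(t\) and the sine component is odd. Hence only one component survives:
\[
\int_{-1}^1 P_k^{(\nu)}(t)\,\mathrm{Li}^{(\nu)}_{z-1}(\pi t)\,dt
=2i\int_0^1 P_k^{(\nu)}\,\Slnu_{z-1}(\pi t)\,dt \ (k\text{ odd}),
\qquad
=2\int_0^1 P_k^{(\nu)}\,\mathrm{C}^{(\nu)}_{z-1}(\pi t)\,dt \ (k\text{ even}).
\]
The rotation \(e^{-ik\pi/2}\) then contributes \((-1)^{(k-1)/2}\) or \((-1)^{k/2}\), together with the factor \(i\) for odd \(k\).

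\emph{Matching on the known range.} Write \(z=2s\). For \(k=2n+1\) the identity becomes exactly Corollary~\ref{cor:eta-beta-clausen}. To see this, use \(P_{2n+1}^{(0)}(t)=2^{2n}B_{2n+1}((t+1)/2)\), where the factor \(2\) coming from the symmetric interval is absorbed. For the beta case use \(P^{(1/2)}_{2n+1}=2^{2n+1}E_{2n+1}((t+1)/2)\). For \(k=2n\) the identity becomes the even-moment formulas \eqref{eq:even-moment-eta}--\eqref{eq:even-moment-beta}, with arguments \(\eta(2s+2n)\) and \(\beta(2s+2n+1)\). So every identity holds for \(\RE(z)\ge1\). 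The exceptions are the even-\(k\) beta identity at \(z=1\), and any case where the boundary term or the convergence of \(\Cl_{z}\) fails. Checking the constants is a routine bookkeeping step.

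For \(\RE(z)=1\) with \(z\neq1\), the even-\(k\) beta case needs separate care. Here I will check that \(\Clh_{z-1}\) is integrable near \(t=1\), where the series \(\sum(-1)^n(n-\tfrac12)^{-(z-1)}\cos\) is the delicate one. I will also check that the integration-by-parts boundary term \(P_{2n+1}\Clh_{z}(\pi t)\big|_0^1\) still vanishes. Both follow from Abel summation of the conditionally convergent series when \(\Im z\ne0\). At \(z=1\), \(\Clh_0\) has a logarithmic or nonintegrable singularity at \(t=1\) coming from \(\sec\), which explains the exclusion.

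\emph{Continuation to \(0<\RE(z)<1\), odd \(k\).} Both sides are holomorphic in \(z\). The right side is entire: \(\eta\) is entire and \(\beta\) is entire. For the left side, I will use Theorem~\ref{thm:continuation}: the sine kernels \(\Sl_{z-1}\) and \(\Slh_{z-1}\) are given by absolutely convergent integrals. I then need holomorphy of \(z\mapsto\int_0^1 P_k\,\Sl_{z-1}(\pi t)\,dt\) on \(\RE(z)>0\), including across \(\RE(z)=1\) where the series definition and the continuation must agree. I plan to get this by uniform local domination.

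\emph{Main obstacle.} The hard step is the behaviour near \(t=0\). There \(\Sl_{z-1}(\pi t)\sim c\,t^{\RE z-2}\), which is not integrable for \(\RE z<1\) alone. The odd polynomial supplies a factor \(t\), so \(P_k\Sl_{z-1}\sim t^{\RE z-1}\), and this is integrable exactly when \(\RE z>0\). I will derive this bound from \eqref{eq:Slz-continuation}: split the \(u\)-integral at \(u\sim 1/t\) to obtain \(|\Sl_{z-1}(\pi t)|\le C\,t^{\RE z-2}\), locally uniformly in \(z\). Morera's theorem together with dominated convergence then gives holomorphy. The identity theorem extends the equality from \(\RE z>1\) to \(\RE z>0\).

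For even \(k\) the corresponding singularity is \(\Cl_{z-1}\sim t^{\RE z-2}\) with no vanishing factor from \(P_k(0)\). This forces \(\RE z>1\) for integrability, or \(\RE z=1\) with the oscillatory borderline treated above. This explains the stated restrictions.
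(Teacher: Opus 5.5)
\emph{Odd $k$.} Your odd-$k$ argument is sound and differs from the paper's. The paper stays in the strip, dominates the double integral built from \eqref{eq:Slz-continuation} using $|P_k^{(\nu)}(t)|\le Mt$, applies Fubini on $(0,1)\times(0,\infty)$, and then evaluates. Your holomorphy-plus-identity-theorem route, started from the range of Corollary~\ref{cor:eta-beta-clausen}, avoids that explicit evaluation. The crossing of $\RE(z)=1$ that you flag is settled by noting that the integral in \eqref{eq:Slz-continuation} converges for $\RE(z)<2$, because $\sinh((1-t)u)/\sinh u$ is bounded near $u=0$. So that formula continues to $\RE(z)<2$, as \eqref{eq:Slh-continuation} already does, and gives $|\Slnu_{z-1}(\pi t)|\le C\,t^{\RE(z)-2}$ locally uniformly on $0<\RE(z)<2$.

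\emph{Even $k$, first gap: the range.} Your matching is off by half a step. For $k=2n$ the kernel is $\mathrm{C}^{(\nu)}_{z-1}$ and the right side is $\eta(z+2n-1)$, resp.\ $\beta(z+2n)$. Hence \eqref{eq:even-moment-eta}--\eqref{eq:even-moment-beta} must be used with $2s=z-1$, not $2s=z$. Since they are derived from Corollary~\ref{cor:eta-beta-clausen} under $\RE(s)\ge\tfrac12$, they give the even identities only for $\RE(z)\ge2$, not for $\RE(z)\ge1$. To reach $1<\RE(z)<2$ you must first finish the odd case on the strip, then integrate by parts from the odd identity at $z-1$. The paper's one-line treatment of $\RE(z)>1$ tacitly needs the same ordering. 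The relevant boundary term is $P_{2n+1}^{(\nu)}(t)\,\mathrm{C}^{(\nu)}_{z-1}(\pi t)$, not one involving $\mathrm{C}^{(\nu)}_{z}$. It is $O(t^{\RE(z)-1})$ at $t=0$, so it vanishes exactly when $\RE(z)>1$.

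\emph{Even $k$, second gap: the line $\RE(z)=1$.} Your analysis here targets the wrong endpoint. Both kernels are real-analytic in $t$ near $t=\pm1$, since on the unit circle $\Li_{z-1}(w)$ and $\Phi(w,z-1,\tfrac12)$ are singular only at $w=1$. At $z=1$ one has $\Clh_0\equiv0$ by \eqref{eq:Half-Clausen-0}. The $\sec$ of Proposition~\ref{prop:odd-integer-equivalence} comes from the sine part after the substitution $u=(t+1)/2$, so it does not explain the exclusion. The paper's reason is that the real part of $\Lih_0$ vanishes while $\beta(k+1)\ne0$.

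The real obstruction is at $t=0$. For $z=1+ic$ with $c\ne0$, one has $\mathrm{C}^{(\nu)}_{ic}(\pi t)\sim i\sinh(\pi c/2)\,\Gamma(1-ic)\,(\pi t)^{ic-1}$, while $P_{2n}^{(\nu)}(0)\ne0$. The integrand therefore behaves like a nonzero multiple of $t^{ic-1}$. The integral diverges not only absolutely but even as an improper integral, since $\int_\varepsilon^1 t^{ic-1}\,dt=(1-\varepsilon^{ic})/(ic)$ has no limit; the boundary term above likewise oscillates. So ``checking integrability'' cannot succeed, for eta as much as for beta. On $\RE(z)=1$ the identity only makes sense after Abel summation of the kernel series and termwise integration against \eqref{eq:Fourier-P0-even} and \eqref{eq:Fourier-P12-even}, which is the paper's argument.

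The same issue appears at $z=1$ in the eta case. The pointwise real part $-\tfrac12$ of $\Liz_0$ integrates to $0$ against $P_{2n}^{(0)}$ for $n\ge1$. The even eta identity there is therefore recovered only in the Abel sense, where the cosine series carries a Dirac mass at $t=0$. Your matching step does not capture this.
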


\begin{proof}
We distinguish the following cases.
\begin{itemize}
\item The case \(\RE(z)>1\) follows from
Corollary~\ref{cor:eta-beta-clausen} and
identities~\eqref{eq:even-moment-eta} and~\eqref{eq:even-moment-beta}.

\item If \(z=1\), the kernels are elementary:
\[
\Liz_0(\pi t)=-\frac12+\frac{i}{2}\cot\!\left(\frac{\pi t}{2}\right),
\qquad
\Lih_0(\pi t)=\frac{i}{2}\csc\!\left(\frac{\pi t}{2}\right).
\]
The odd case is treated in
Proposition~\ref{prop:odd-integer-equivalence}.  The even case is
immediate when \(\nu=0\), since the real part of the kernel is
constant and the polynomials \(P_k^{(0)}\) are integrable on
\([-1,1]\).  It is excluded when \(\nu=\tfrac12\), since the real part
of the kernel is zero while \(\beta(1+k)\ne0\).

\item The case \(z=1+ic\), \(c\ne0\), gives the Abel-summable series
\[
\Liz_{ic}(\pi t)=\sum_{m=1}^\infty \frac{e^{im\pi t}}{m^{ic}},
\qquad
\Lih_{ic}(\pi t)=\sum_{m=1}^\infty
\frac{e^{i(m-\tfrac{1}{2})\pi t}}{(m-\tfrac12)^{ic}}.
\]
Using the Fourier expansions of the polynomials given in
\eqref{eq:Fourier-P0}--\eqref{eq:Fourier-P12-even} and integrating
term by term, we recover \eqref{eq:unified-eta}
and~\eqref{eq:unified-beta}.

\item Finally, suppose \(0<\RE(z)<1\).  Even values of \(k\) are excluded,
so let \(k\) be odd.  Since the cosine parts are even and the sine parts
are odd, parity gives
\[
\int_{-1}^1 P_k^{(\nu)}(t)\,\Linu_{z-1}(\pi t)\,dt
=
2i\int_0^1 P_k^{(\nu)}(t)\,\Slnu_{z-1}(\pi t)\,dt .
\]
Write \(\sigma=\RE(z)\).  Since \(k\) is odd, \(P_k^{(\nu)}(t)=t\,Q(t)\)
with \(Q\) even and bounded on \([0,1]\), so \(|P_k^{(\nu)}(t)|\le Mt\).

For the integer case, \eqref{eq:Slz-continuation} gives
\[
\bigl|\Sl_{z-1}(\pi t)\bigr|
\le \frac{C}{\pi^{2-\sigma}}
\int_0^\infty u^{1-\sigma}\,
\frac{|\sinh((1-t)u)|}{\sinh u}\,du .
\]
Using \(|\sinh a|\le e^{|a|}\), \(\sinh u\ge u/2\) for \(u\in(0,1)\)
and \(\sinh u\ge e^u/4\) for \(u\ge1\), the ratio is
\(O\bigl(\min(1,e^{-tu})\bigr)\), so
\[
\int_0^1 |P_k^{(\nu)}(t)|\,
\frac{|\sinh((1-t)u)|}{\sinh u}\,dt
\le C'\!\int_0^1 t\,e^{-tu}\,dt
\le C'\min\!\left(\tfrac12,\frac{1}{u^2}\right).
\]
Therefore the double integral is dominated by
\[
C''\!\int_0^\infty u^{1-\sigma}
\min\!\left(1,u^{-2}\right)du<\infty,
\]
which converges for \(0<\sigma<1\).  The half-integer case is identical,
with \(\cosh u\) in place of \(\sinh u\) and \(\cosh((1-t)u)\) in place
of \(\sinh((1-t)u)\); the bounds are the same.

Fubini's theorem therefore applies directly on \((0,1)\times(0,\infty)\).
Evaluating the \(u\)-integral as in the case \(\RE(z)>1\) gives
\eqref{eq:unified-eta} and~\eqref{eq:unified-beta} for odd \(k\).\end{itemize}
\end{proof}

As a consequence, the shift representation in integral form has an
\emph{exceptional case} for \(\nu=\tfrac12\): when \(z=1\), it does not apply
for even \(k\), i.e. for the even polynomials \(P_k^{(1/2)}\).

\paragraph{Shift representation at odd polynomial orders}
Set \(L^{(0)}\eqdef\eta\), \(L^{(1/2)}\eqdef\beta\),
so that \(L^{(\nu)}\) denotes either function. The shift representation of
Theorem~\ref{th:unified-kernels}, restricted to odd polynomial orders
\(k=2n+1\), \(n\ge0\), computes the corresponding shifts to the
right of a base point \(z\in\C\) with \(0<\RE(z)<1\). Combined
with the functional equation, it generates every corresponding
integer shift of either function from two pairings and the factor
\(\Chi^{(\nu)}\).

Recall the functional equations
\[
L^{(\nu)}(w)=\Chi^{(\nu)}(w)\,L^{(\nu)}(1-w),
\]
with
\[
\Chiz(w) = 2^w \pi^{w-1}\sin\!\left(\frac{\pi w}{2}\right)\Gamma(1-w)
\frac{1-2^{1-w}}{1-2^w},
\qquad
\Chih(w) = \left(\frac{2}{\pi}\right)^{1-w}\Gamma(1-w)
\cos\!\left(\frac{\pi w}{2}\right),
\]
satisfying
\(\Chi^{(\nu)}(w)\Chi^{(\nu)}(1-w)=1\).
Write
\[
\cKz_u(\pi t)\eqdef\Liz_u(\pi t),
\qquad
\cKh_u(\pi t)\eqdef \frac{\Lih_u(\pi t)}{2^{\,u+2}},
\]
and
\[
\theta_{2n+1}^{(0)} \eqdef e^{-i(2n+1)\pi/2}\frac{\pi^{2n+1}}{(2n+1)!},
\qquad
\theta_{2n+1}^{(1/2)} \eqdef e^{-i(2n+1)\pi/2} \frac{\pi^{2n+2}}{2^{2n+1}(2n+1)!}.
\]
For every \(n\ge0\), Theorem~\ref{th:unified-kernels} can be written
compactly as
\[
L^{(\nu)}\bigl(z+2n+2\nu\bigr)
= \theta_{2n+1}^{(\nu)} \proscal{P_{2n+1}^{(\nu)}}{\cKnu_{z-1}}.
\]

\begin{itemize}
\item Fix \(0<\RE(z)<1\) with \(\RE(z)\ne\tfrac12\). At the base
point \(z\), the theorem gives
\[
L^{(\nu)}(z+2n+2\nu) = \theta_{2n+1}^{(\nu)}
\proscal{P_{2n+1}^{(\nu)}}{\cKnu_{z-1}},
\qquad n\ge0.
\]
At the reflected base point \(1-z\), the same formula holds with
\(\cKnu_{z-1}\) replaced by \(\cKnu_{-z}\):
\[
L^{(\nu)}(1-z+2n+2\nu)
=
\theta_{2n+1}^{(\nu)}
\proscal{P_{2n+1}^{(\nu)}}{\cKnu_{-z}},
\qquad n\ge0.
\]
The functional equation then gives the corresponding negative shifts:
\[
L^{(\nu)}\bigl(z-(2n+2\nu)\bigr)
=
\Chi^{(\nu)}(z-2n-2\nu)\,
\theta_{2n+1}^{(\nu)}
\proscal{P_{2n+1}^{(\nu)}}{\cKnu_{-z}},
\]
and
\[
L^{(\nu)}\bigl(1-z-(2n+2\nu)\bigr)
=
\Chi^{(\nu)}(1-z-2n-2\nu)\,
\theta_{2n+1}^{(\nu)}
\proscal{P_{2n+1}^{(\nu)}}{\cKnu_{z-1}},
\]
where \(n\ge0\) and \(2n+2\nu\ge1\).

\begin{table}[htb!]
\centering
\begin{tabular}{c|c|c}
value & formula & range of \(n\) \\
\hline
\(L^{(\nu)}(z+2n+2\nu)\)
&
\(\theta_{2n+1}^{(\nu)}  \proscal{P_{2n+1}^{(\nu)}}{\cKnu_{z-1}}\)
&
\(n\ge0\)
\\[2mm]
\(L^{(\nu)}(1-z+2n+2\nu)\)
&
\(\theta_{2n+1}^{(\nu)}  \proscal{P_{2n+1}^{(\nu)}}{\cKnu_{-z}}\)
&
\(n\geq0,\quad 2n+2\nu\geq0\)
\\
\(L^{(\nu)}(z-(2n+2\nu))\)
&
\(\Chi^{(\nu)}(z-2n-2\nu)\,
 \theta_{2n+1}^{(\nu)}  \proscal{P_{2n+1}^{(\nu)}}{\cKnu_{-z}}\)
& \(n\ge1\) \\
\(L^{(\nu)}(1-z-(2n+2\nu))\) & \(\Chi^{(\nu)}(1-z-2n-2\nu)\,
\theta_{2n+1}^{(\nu)} \proscal{P_{2n+1}^{(\nu)}}{\cKnu_{z-1}}\)
& \(n\geq0,\quad 2n+2\nu\geq0\) \\
\hline
\end{tabular}
\caption{Computation of the shifts generated by the odd polynomial
orders \(k=2n+1\).}
\label{tab:shift-values}
\end{table}

\item On the critical line \(\RE(z)=\tfrac12\), write
\(z=\tfrac12+ic\), \(c\in\R\). Then \(1-z=\bar z\), the kernels are
conjugate, and \(|\Chi^{(\nu)}(\tfrac12+ic)|=1\). Since both \(\eta\)
and \(\beta\) have real Dirichlet coefficients, the Schwarz reflection
principle gives
\[
L^{(\nu)}(s)=\overline{L^{(\nu)}(\bar{s})}
\]
in the half-plane of absolute convergence, and hence everywhere by
analytic continuation; see \cite[Section~2.1]{Titchmarsh1986}.
For every \(n\ge0\), we obtain
\[
\proscal{P_{2n+1}^{(0)}}{\cKz_{-1/2-ic}}
  = -\,\overline{\proscal{P_{2n+1}^{(0)}}{\cKz_{-1/2+ic}}},
\]
and
\[
\proscal{P_{2n+1}^{(1/2)}}{\cKh_{-1/2-ic}}
  = -\,\overline{\proscal{P_{2n+1}^{(1/2)}}{\cKh_{-1/2+ic}}}.
\]
The minus sign is the parity factor
\[
P_{2n+1}^{(\nu)}(-t)=-P_{2n+1}^{(\nu)}(t).
\]
Indeed,
\(\overline{\cKnu_u(\pi t)}=\cKnu_{\bar u}(-\pi t)\); the substitution
\(t\mapsto -t\) brings the reflected kernel back to \(\pi t\), at the
price of this parity factor.
\end{itemize}

\section{Concluding Remarks}
\label{sec:conclusion}

We have derived asymptotic expansions for the entries of the inverse of
Vandermonde matrices with nodes \(u_n=n^2\) and \(u_n=(n-\tfrac12)^2\).
These expansions, expressed in terms of central factorial numbers, are of
independent interest and may be relevant in random matrix theory, where
Vandermonde matrices with quadratic nodes appear naturally.  The
mechanism is not restricted to quadratic nodes.  Choosing \(u_n=n\), the
elementary symmetric polynomials \(\sigma_{l-k}^{l-1}(\set{1,\dots,l-1})\)
are the unsigned Stirling numbers of the first kind
\(\stirlingfirst{l}{k}\), and the restricted products reduce to
factorials, so the coefficients take the form
\[ w_{kn}^{[N]} = (-1)^{n+k}n
\sum_{l=\max(k,n)}^{N}\binom{l}{l-n}
\frac{1}{l!}\stirlingfirst{l}{k},
\]
where \(\stirlingfirst{l}{k}\) denotes the unsigned Stirling number of
the first kind. A geometric progression \(u_n=q^n\) replaces the
ordinary elementary symmetric polynomials by their \(q\)-analogues, and
the coefficients become finite sums of \(q\)-Stirling numbers weighted
by Gaussian binomials.

The ratio of determinants \(R_N(s;\bu)\) studied in
Section~\ref{sec:ratio-determinant} provides a second point of contact
with random matrix theory.  Ratios of determinants by Vandermonde
determinants are a standard object in random matrix theory, where they
appear in expectation values of characteristic polynomials.
In the unpublished work~\cite{iovleff2022}, we showed that the same ratio
admits a probabilistic representation as the expectation of a functional
of a random vector with Dixon--Anderson density \cite{forrester2010log}.

The shift representation developed in Sections~\ref{sec:shift-structure}
and~\ref{sec:unified-kernel} is, to our knowledge, new.
The last part of our work associating the shifts with the functional
relation can be described as the infinite dihedral group \(D_\infty\)
generated by the unit shift and the reflection \(z\mapsto 1-z\).
The functional equation supplies a cocycle that twists
the action (\cite{brown1982cohomology}, \cite{montgomery1993hopf}).
We have not pursued this algebraic interpretation, as it
lies outside our expertise, but the dihedral structure and the role
of the cocycle seem to be worth a systematic investigation.

\section*{Afterword}
A more personal closing word about the use of AI.  I have made intensive
use of it over the past months, running several open-source AIs
concurrently.  The uses ranged from literature search and reformulation
to the testing of hypotheses and the checking of computations; the
choice of what to keep, what to discard, and what to prove differently
remained mine throughout.  As a result, my way of working and of
producing the results of this article has been completely overturned.

Without going into details, every part of this article is the result of
a ceaseless back-and-forth between myself and the AIs.  I can
nevertheless state firmly that everything written here is mine.  The
text has been carefully reread; the unnecessary repetitions, the
cross-references to other parts and theorems of the text, and the
citations are my choices.  Every displayed identity has been verified
independently; the responsibility for every claim, every citation and
every decision to include or discard a result is mine.  The tools were
most useful where the algebra is long.  Several claims of the first
draft (and sometimes of the second or third) were corrected only after
a prompt to the tools and a proof worked out by hand, run concurrently,
had traced the error back to its source.  But above all, and this is the
most important point, the coherence of the whole came only from me.

Even though mathematics is presented as the discovery of new results, it
is also a human construction.  To return to the analogy of the walk,
reading an article is as much the discovery of a new landscape as the
discovery of a result.  In some sense, the way the result is presented
matters as much as the result. The path I chose is the landscape
I offer.


\begin{thebibliography}{10}

\bibitem{abramowitz1988handbook}
Milton Abramowitz, Irene~A Stegun, and Robert~H Romer.
\newblock Handbook of mathematical functions with formulas, graphs, and
  mathematical tables, 1988.

\bibitem{apostol1998introduction}
T.M. Apostol.
\newblock {\em Introduction to Analytic Number Theory}.
\newblock Undergraduate Texts in Mathematics. Springer New York, 1998.

\bibitem{biane2001probability}
Philippe Biane, Jim Pitman, and Marc Yor.
\newblock Probability laws related to the jacobi theta and riemann zeta
  functions, and brownian excursions.
\newblock {\em Bulletin of the American Mathematical Society}, 38(4):435--465,
  2001.

\bibitem{brown1982cohomology}
Kenneth~S. Brown.
\newblock {\em Cohomology of Groups}, volume~87 of {\em Graduate Texts in
  Mathematics}.
\newblock Springer-Verlag, New York, 1982.

\bibitem{Butzer1989}
P.~L. Butzer, K.~Schmidt, E.L. Stark, and L.~Vogt.
\newblock Central factorial numbers; their main properties and some
  applications.
\newblock {\em Numerical Functional Analysis and Optimization},
  10(5-6):419--488, 1989.

\bibitem{DLMF}
Nist digital library of mathematical functions.
\newblock \url{https://dlmf.nist.gov/}, 2025.
\newblock Release 1.2.4 of 2025-03-15.

\bibitem{Egorychev:1984:IRC}
G.~P. Egorychev.
\newblock {\em Integral Representation and the Computation of Combinatorial
  Sums}, volume~59 of {\em Translations of Mathematical Monographs}.
\newblock American Mathematical Society, Providence, RI, 1984.
\newblock Translated from the Russian by H. H. McFadden, Translation edited by
  Lev J. Leifman.

\bibitem{forrester2010log}
Peter~J. Forrester.
\newblock {\em Log-Gases and Random Matrices}, volume~34 of {\em London
  Mathematical Society Monographs}.
\newblock Princeton University Press, 2010.

\bibitem{gradshteyn2014}
I.~S. Gradshteyn and I.~M. Ryzhik.
\newblock {\em Table of integrals, series, and products}.
\newblock Elsevier/Academic Press, Amsterdam, eigthth edition, 2014.

\bibitem{iovleff2022}
Serge Iovleff.
\newblock A new probabilistic representation of the alternating zeta function
  and a new selberg-like integral evaluation.
\newblock arXiv preprint math/2203.09787, 2022.

\bibitem{knuth1968art}
Donald~E. Knuth.
\newblock {\em The Art of Computer Programming, Volume 1: Fundamental
  Algorithms}.
\newblock Addison-Wesley, 1968.

\bibitem{lewin1981polylogarithms}
Leonard Lewin.
\newblock {\em Polylogarithms and associated functions}.
\newblock North-Holland, Amsterdam, 1981.

\bibitem{macdonald1998symmetric}
Ian~Grant Macdonald.
\newblock {\em Symmetric functions and Hall polynomials}.
\newblock Oxford university press, 1998.

\bibitem{Macon1958}
N.~Macon and A.~Spitzbart.
\newblock Inverses of {V}andermonde matrices.
\newblock {\em The American Mathematical Monthly}, 65(2):95--100, 1958.

\bibitem{montgomery1993hopf}
Susan Montgomery.
\newblock {\em Hopf Algebras and Their Actions on Rings}, volume~43 of {\em
  CBMS Regional Conference Series in Mathematics}.
\newblock American Mathematical Society, Providence, RI, 1993.

\bibitem{spiegel2009mathematical}
Murray~R. Spiegel, Seymour Lipschutz, and John Liu.
\newblock {\em Mathematical Handbook of Formulas and Tables}.
\newblock McGraw-Hill, New York, 3rd edition, 2009.

\bibitem{Titchmarsh1986}
E.~C. Titchmarsh.
\newblock {\em The Theory of the Riemann Zeta-Function}.
\newblock Oxford University Press, 2nd edition, 1986.

\bibitem{Turner1966}
L.~Richard Turner.
\newblock Inverse of the vandermonde matrix with applications.
\newblock {NASA} Technical Note NASA TN D-3547, National Aeronautics and Space
  Administration, Washington, D.C., aug 1966.
\newblock Lewis Research Center, Cleveland, Ohio.

\end{thebibliography}

\appendix

\section{Inverse of Vandermonde Matrices}
\label{app:inverseVandermonde}

The proof of these formulas can be found at 
\href{https://proofwiki.org/wiki/Inverse_of_Vandermonde_Matrix}{Inverse of Vandermonde Matrix}
and is proposed as Exercise 40 from section 1.2.3 in \cite{knuth1968art}.
Consider the Vandermonde matrix
\begin{equation*}
V_N =
\begin{pmatrix}
  1    & u_1    & u_1^2  & \cdots & u_1^{N-1} \\
  1    & u_2    & u_2^2  & \cdots & u_2^{N-1} \\
\vdots & \vdots & \ddots & \vdots & \vdots \\
  1    &   u_N  & u_N^2  & \cdots & u_N^{N-1} \\
\end{pmatrix}
\end{equation*}
If $u_i\neq u_j$ for $i\neq j$, the determinant of $V_N$ is
$$
\Det{V_N} = \prod_{1 \mathop \le i \mathop < j \mathop \le N}
 \left({u_j - u_i}\right) \ne 0.
$$
Since this is nonzero, the inverse matrix, denoted $W_{N} = \left[{w_{ij}^{[N]}}\right]_{1\leq i,j\leq N}$, is guaranteed to exist.
By the definition of the matrix product and the identity $W_NV_N=I$, we have
$$\sum_{k = 1}^N u_i^{k-1} w_{kj}^{[N]}  = \delta_{ij}\quad
1\leq i,j\leq N.
$$
For $1\leq n \leq N$, if $P_n \left({x}\right)$ is the polynomial
$$P_n \left(x\right) := \sum_{k=1}^N w_{kn}^{[N]} x^{k-1}$$
then
$P_n \left({u_1}\right) = 0, \ldots, P_n \left({u_{n-1}}\right) = 0,
 P_n \left({u_n}\right) = 1, P_n \left({u_{n+1}}\right) = 0, \ldots,
  P_n \left({u_N}\right) = 0.$
By Lagrange's interpolation formula, the $n$th column of $W_{N}$ is composed of the
coefficients of the $n$th Lagrange basis polynomial
$$
P_n \left({x}\right) = \sum_{k = 1}^N w_{kn}^{[N]} x^{k-1}
= \prod_{\substack {1 \leq j \leq N \\ j \neq n}} \frac {x - u_j} {u_n - u_j}.
$$
Expanding the product identifies $w_{kn}^{[N]}$ as
$$
w_{kn}^{[N]} = (-1)^{N-k} \frac
{\sigma_{N-k}^{N-1}(\left\{ u_1,\ldots,u_N \right\}\backslash \left\{ u_n \right\})}
{\displaystyle\prod_{\substack {1 \leq j \leq N \\ j \neq n}} (u_n - u_j)}
$$
with $\sigma_{N-k}^{N-1}$ denoting the elementary symmetric polynomials
of degree $N-k$
in $N-1$ variables (\cite{macdonald1998symmetric}). In particular
\begin{itemize}
\item The first row of the matrix $W_{N}$ ($k=1$) is given by
$$
w_{1n}^{[N]}= (-1)^{N-1}
  \prod_{\substack {1 \leq j \leq N \\ j \neq n}}  \frac{u_j}{u_n-u_j}
= \prod_{\substack {1 \leq j \leq N \\ j \neq n}}  \frac{u_j}{u_j-u_n}
\qquad n=1,\ldots,N.
$$
\item The last row of the matrix $W_{N}$ ($k=N$) is given by
$$
w_{Nn}^{[N]}=  \prod_{\substack {1 \leq j \leq N \\ j \neq n}}  \frac{1}{u_n-u_j}
\qquad n=1,\ldots,N.
$$
\item The last column of the matrix $W_{N}$ ($n=N$) is given by
$$
w_{kN}^{[N]} = (-1)^{N-k}
\frac{\sigma_{N-k}^{N-1}(u_1,\ldots,u_{N-1})}
     {\displaystyle\prod_{\substack {1 \leq j \leq N-1}} (u_N - u_j)},
\qquad k=1,\ldots, N.
$$
\end{itemize}

\section{The unsigned central factorial numbers (cfn)}
\label{app:cfn}
The central factorial numbers arise in finite-difference calculus with
central differences and in expansions of central factorial powers. They
play a role analogous to that of the Stirling numbers, but with respect
to the central-difference operator rather than the forward-difference
operator.
The even unsigned central factorial numbers, denoted in this work by
$t_0(k,n) = |t(2n,2k)|$, (\url{https://oeis.org/A008955}),
and the odd unsigned central factorial numbers, denoted by
$t_{\frac12}(k,n) = |t(2n+1,2k+1)|$, (\url{https://oeis.org/A008956}),
are obtained from the recurrences
\begin{eqnarray*}
t_0(k,n) &=& t_0(k-1,n-1) + (n-1)^2 \, t_0(k,n-1),\\
t_{\frac12}(k,n) &=& t_{\frac12}(k-1,n-1) + \left(n-\frac{1}{2}\right)^2 \, t_{\frac12}(k,n-1),    
\end{eqnarray*}
with the boundary conditions
$t_0(n,n) = t_{\frac12}(n,n) = 1$ for all $n\geq 0$, and
$$
t_0(0,n) = 0, \qquad 
t_{\frac12}(0,n) = \frac{(2n-1)!!^2}{4^n}, \qquad n \ge 1,
$$
where $(2n+1)!! = 1 \cdot 3 \cdot 5 \cdots (2n+1)$ denotes the double factorial. 
We adopt the convention that $t_0(k,n)=t_{\frac12}(k,n) = 0$ if $k>n$.
These numbers are presented and studied in \cite{Butzer1989}. From that reference, 
we have the following generating functions for $|z| \le 2$ and $k \in \mathbb{N}$ 
(\cite[Theorem 4.1.2]{Butzer1989}):
\begin{eqnarray}
\frac{1}{(2k)!} \left[2\,\arcsin\left(\frac{z}{2}\right)\right]^{2k}
&=& \sum_{j=k}^{\infty} t_0(k,j) \, \frac{z^{2j}}{(2j)!}
\label{eq:gen-funct-cfn-0}\\
\frac{1}{(2k+1)!} \left[2\,\arcsin\left(\frac{z}{2}\right)\right]^{2k+1}
&=& \sum_{j=k}^{\infty} t_{\frac12}(k,j) \, \frac{z^{2j+1}}{(2j+1)!}
\label{eq:gen-funct-cfn-12}
\end{eqnarray}

\section{Generalized Clausen Functions}
\label{app:Clausen}

Let $z\in\C$. Throughout this appendix, we use the conventions
\begin{equation}\label{eq:Clausen-def}
\Sl_z(\theta) \eqdef \sum_{n=1}^\infty \frac{\sin (n\theta)}{n^z}
\qquad\text{and}\qquad 
\Cl_z(\theta) \eqdef \sum_{n=1}^\infty \frac{\cos (n\theta)}{n^z},
\end{equation}
which define the \emph{generalized Clausen functions}.  
These differ slightly from the notation of Lewin~\cite{lewin1981polylogarithms}, 
where the trigonometric factor depends on the parity of the index.
Our $\Sl_z$ and $\Cl_z$ are simply the two components of the polylogarithm
on the unit circle:
\begin{equation}\label{eq:Clausen-polylog}
\Li_z\!\left(e^{i\theta}\right)=\Cl_z(\theta)+i\,\Sl_z(\theta),
\qquad \theta\in(0,2\pi).
\end{equation}
The series \eqref{eq:Clausen-def} converges absolutely for $\RE(z)>1$ and 
conditionally for $\RE(z)>0$ when $\theta\not\in2\pi\Z$.
Differentiating under the sum (or using \eqref{eq:Clausen-polylog}) gives the 
recurrence relations
\begin{equation}\label{eq:Clausen-diff}
\frac{\partial}{\partial\theta}\Cl_z(\theta)=-\Sl_{z-1}(\theta),
\qquad
\frac{\partial}{\partial\theta}\Sl_z(\theta)=\Cl_{z-1}(\theta).
\end{equation}

\paragraph{Positive integer order.}
For $j\in\N^\star$ and $\theta\in[0,2\pi]$ one has the Fourier-Bernoulli 
formulas (see \cite[p.~167]{apostol1998introduction})
\begin{equation}\label{eq:Clausen-odd}
\Sl_{2j-1}(\theta) = \frac{(-1)^j}{2}\frac{(2\pi)^{2j-1}}{(2j-1)!}\,
B_{2j-1}\!\left(\frac{\theta}{2\pi}\right),
\quad\text{and}\quad
\Cl_{2j}(\theta) = \frac{(-1)^{j+1}}{2}\frac{(2\pi)^{2j}}{(2j)!}\,
B_{2j}\!\left(\frac{\theta}{2\pi}\right),
\end{equation}
where $B_n(x)$ denotes the Bernoulli polynomials.  
For $j=1$ the formula \eqref{eq:Clausen-odd} is valid for 
$\theta\in(0,2\pi)$ because of the jump discontinuity of $B_1$ at the 
endpoints; for $j\ge2$ it holds on the closed interval $[0,2\pi]$.
The Clausen function of order two has the well-known integral representation 
(\cite{abramowitz1988handbook}, Ch.~27.8)
\begin{equation}\label{eq:Clausen-2}
\Sl_2(\theta)= \sum_{n=1}^\infty \frac{\sin(n\theta)}{n^2}
= - \int_0^\theta \ln\!\left(2\sin\frac{t}{2}\right) dt,
\qquad \theta\in(0,2\pi).    
\end{equation}

\paragraph{Orders zero and one.}
In particular, differentiating \eqref{eq:Clausen-2} once yields
\[
\Cl_1(\theta)=\frac{d}{d\theta}\Sl_2(\theta)
= -\ln\!\left(2\sin\frac{\theta}{2}\right), \qquad \theta\in(0,2\pi),
\]
and differentiating once more (or using \eqref{eq:Clausen-diff} with $z=2$) 
gives
\[
\Sl_0(\theta) = -\frac{d^2}{d\theta^2}\Sl_2(\theta)
= \frac{1}{2}\cot\!\left(\frac{\theta}{2}\right), \qquad \theta\in(0,2\pi).
\]
Equivalently, $\Sl_0$ and $\Cl_0$ are obtained from the polylogarithm 
$\Li_0(e^{i\theta})=-\tfrac12+\tfrac{i}{2}\cot(\theta/2)$.

\paragraph{Special values at the endpoints.}
For $\RE(z)>0$ one has immediately \( \Sl_z(0)=\Sl_z(\pi)=0\).
Moreover, by evaluating the cosine series at $\theta=0$ and $\theta=\pi$,
\[
\Cl_{z}(0) = \zeta(z),\qquad \Cl_z(\pi)= -\eta(z),
\]
where $\zeta$ is the Riemann zeta function and $\eta(z)=(1-2^{1-z})\zeta(z)$ 
is the Dirichlet eta function.

\section{Generalized half-shifted Clausen Functions}
\label{app:Half-Clausen}

Let $z\in\C$. We define the \emph{generalized half-shifted Clausen functions} by
\begin{equation}\label{eq:Half-Clausen-def}
\Slh_z(\theta) \eqdef \sum_{n=1}^\infty 
\frac{\sin\left((n-\tfrac12)\theta\right)}{(n-\tfrac12)^z}
\qquad\text{and}\qquad
\Clh_z(\theta) \eqdef \sum_{n=1}^\infty 
\frac{\cos\left((n-\tfrac12)\theta\right)}{(n-\tfrac12)^z},
\end{equation}
which converge absolutely for $\RE(z)>1$.  These functions are related to
the half-shifted polylogarithm by the algebraic identity
\begin{equation}\label{eq:Half-Clausen-Lerch}
\Lih_z(\theta)\eqdef \sum_{n=1}^{\infty} \frac{e^{i(n-1/2)\theta}}{(n-\tfrac12)^z}
= e^{i\theta/2}\,\Phi\!\left(e^{i\theta},z,\tfrac12\right)
= \Clh_z(\theta)+i\,\Slh_z(\theta),
\end{equation}
where $\Phi(w,z,a)$ denotes the Lerch transcendent. 
Differentiating under the sum (or using \eqref{eq:Half-Clausen-Lerch}) gives the 
recurrence relations
\begin{equation}\label{eq:Half-Clausen-diff}
\frac{\partial}{\partial\theta}\Clh_z(\theta)=-\Slh_{z-1}(\theta),
\qquad
\frac{\partial}{\partial\theta}\Slh_z(\theta)=\Clh_{z-1}(\theta).
\end{equation}

\paragraph{Positive integer order.}
For $j\in\N^\star$ and $\theta\in[0,2\pi]$ the half-shifted Clausen functions
of even and odd order are expressed through the Euler polynomials $E_n(x)$ by
\begin{equation}\label{eq:Half-Clausen-even-cos}
\Clh_{2j}(\theta) = \frac{(-1)^j}{4}\,\frac{(2\pi)^{2j}}{(2j-1)!}\,
E_{2j-1}\!\left(\frac{\theta}{2\pi}\right)
\quad \text{and}\quad
\Slh_{2j+1}(\theta) = \frac{(-1)^j}{4}\,\frac{(2\pi)^{2j+1}}{(2j)!}\,
E_{2j}\!\left(\frac{\theta}{2\pi}\right).
\end{equation}
The complementary orders $\Slh_{2j}$ and $\Clh_{2j+1}$
do not reduce to elementary polynomials; the lowest nontrivial example is the
half-shifted Clausen integral
\begin{equation}\label{eq:Half-Clausen-2}
\Slh_{2}(\theta)
= \int_0^\theta \ln\!\left(\cot\frac{t}{4}\right)dt,
\qquad \theta\in(0,2\pi),
\end{equation}
which follows from termwise integration of the series for $\Clh_1$.

\paragraph{Orders zero and one.}
Differentiating \eqref{eq:Half-Clausen-2} once gives
\[
\Clh_1(\theta) = \frac{d}{d\theta}\Slh_{2}(\theta)
= \ln\!\left(\cot\frac{\theta}{4}\right), \qquad \theta\in(0,2\pi),
\]
and a second differentiation yields, via
$\partial_\theta^2\Slh_z(\theta)=-\Slh_{z-2}(\theta)$,
\[
\Slh_0(\theta) = -\frac{d^2}{d\theta^2}\Slh_2(\theta)
= \frac12\csc\!\left(\frac{\theta}{2}\right), \qquad \theta\in(0,2\pi).
\]
Equivalently, from the Lerch transcendent at $z=0$ one obtains
$\Lih_0(\theta)=i/(2\sin(\theta/2))$, whence
\begin{equation}\label{eq:Half-Clausen-0}
\Clh_0(\theta)=0, \qquad
\Slh_0(\theta)=\frac12\csc\!\left(\frac{\theta}{2}\right).
\end{equation}

\paragraph{Special values at the endpoints.}
For $\RE(z)>1$ one has immediately \(\Slh_z(0)=0\).
Evaluating the cosine series at $\theta=0$ gives
\begin{equation}\label{eq:Half-Clausen-theta0}
\Clh_z(0) = \sum_{n=1}^{\infty}\frac{1}{(n-\tfrac12)^z}= (2^z-1)\zeta(z),
\qquad z\neq1,
\end{equation}
where $\zeta$ is the Riemann zeta function.  At $\theta=\pi$ the cosine
terms vanish identically, while the sine series produces the Dirichlet beta
function $\beta(z)$:
\begin{equation}\label{eq:Half-Clausen-thetapi}
\Clh_z(\pi)=0,
\qquad
\Slh_z(\pi) = \sum_{n=1}^{\infty}\frac{(-1)^{n-1}}{(n-\tfrac12)^z}
= 2^z\beta(z).
\end{equation}
In particular, $\Slh_1(\theta)=\pi/2=2\beta(1)$ for every $\theta\in(0,2\pi)$,
so $\Slh_1(\pi)=\pi/2=2\beta(1)$.

\section{Proof of the theorem \ref{thm:continuation}}
\label{app:jonquiere}
Let \(0<t<1\). We start from Lerch's transformation formula
\cite[Entry~9.553]{gradshteyn2014}.
\begin{equation}\label{eq:Lerch-transform}
\begin{aligned}
\Phi(e^{i\pi t},z-1,a) =&i(2\pi)^{z-2}e^{-i\pi ta}\Gamma(2-z)
\\
&\times
\left[
  e^{-i\pi(z-1)/2}\Phi\left(e^{-2\pi ia},2-z,\frac t2\right)
- e^{i\pi(2a+(z-1)/2)}\Phi\left(e^{2\pi ia},2-z,1-\frac t2\right)
\right].
\end{aligned}
\end{equation}
The two kernels correspond to the two specializations \(a=1\) and
\(a=\tfrac12\).

\paragraph{Case \(\nu=0\).}
Since
\[
\Liz_{z-1}(\pi t)=e^{i\pi t}\Phi(e^{i\pi t},z-1,1),
\]
the factors \(e^{i\pi t}\) and \(e^{-i\pi t}\) cancel in
\eqref{eq:Lerch-transform}. Moreover,
\[
       e^{-i\pi(z-1)/2}=i e^{-i\pi z/2},
\qquad e^{i\pi(2+(z-1)/2)}=-i e^{i\pi z/2}.
\]
Therefore
\begin{equation}\label{eq:Li-Jonquiere}
\Liz_{z-1}(\pi t)
=
-\frac{\Gamma(2-z)}{(2\pi)^{2-z}}
\left[
e^{-i\pi z/2}\zeta\!\left(2-z,\frac t2\right)
+
e^{i\pi z/2}\zeta\!\left(2-z,1-\frac t2\right)
\right].
\end{equation}
For \(a>0\) and \(\RE(z)<1\),
\begin{equation}\label{eq:Hurwitz-integral}
\zeta(2-z,a)
=
\frac1{\Gamma(2-z)}
\int_0^\infty
\frac{v^{1-z}e^{-av}}{1-e^{-v}}\,dv.
\end{equation}
Substituting \eqref{eq:Hurwitz-integral} into
\eqref{eq:Li-Jonquiere} gives
\[
\Liz_{z-1}(\pi t)
=
-\frac1{(2\pi)^{2-z}}
\int_0^\infty
\frac{v^{1-z}}{1-e^{-v}}
\left[
e^{-tv/2-i\pi z/2}
+
e^{-(1-t/2)v+i\pi z/2}
\right]dv.
\]
Since
\[
1-e^{-v}=2e^{-v/2}\sinh(v/2),
\]
this becomes
\[
\Liz_{z-1}(\pi t)
=
-\frac1{(2\pi)^{2-z}}
\int_0^\infty
v^{1-z}
\frac{
\cosh\left((1-t)v/2-i\pi z/2\right)
}{
\sinh(v/2)
}\,dv.
\]
Setting \(v=2u\), we obtain \eqref{eq:Li-continuation}.

\paragraph{Case \(\nu=\tfrac12\).}
Since
\[
\Lih_{z-1}(\pi t) = e^{i\pi t/2}\Phi(e^{i\pi t},z-1,\tfrac12),
\]
the factors \(e^{i\pi t/2}\) and \(e^{-i\pi t/2}\) cancel in
\eqref{eq:Lerch-transform}. Now \(e^{-\pi i }=e^{\pi i}=-1,\)
and
\[
e^{-i\pi(z-1)/2}=i e^{-i\pi z/2},
\qquad e^{i\pi(1+(z-1)/2)}=i e^{i\pi z/2}.
\]
Consequently,
\begin{equation}\label{eq:Lih-Jonquiere}
\Lih_{z-1}(\pi t)=\frac{\Gamma(2-z)}{(2\pi)^{2-z}}
\left[
 e^{i\pi z/2} \Phi\!\left(-1,2-z,1-\frac t2\right)
-e^{-i\pi z/2}\Phi\!\left(-1,2-z,\frac t2\right)
\right].
\end{equation}
For \(\RE(z)<1\), the alternating periodic Lerch function satisfies
\begin{equation}\label{eq:periodic-Hurwitz}
\Phi(-1,2-z,x)=2^{-(2-z)}
\left[
 \zeta\!\left(2-z,\frac x2\right)
-\zeta\!\left(2-z,\frac{x+1}{2}\right)
\right].
\end{equation}
Applying \eqref{eq:periodic-Hurwitz} to \eqref{eq:Lih-Jonquiere}
gives
\begin{equation}\label{eq:Lih-Hurwitz}
\begin{aligned}
\Lih_{z-1}(\pi t) =& \frac{\Gamma(2-z)}{(4\pi)^{2-z}}
\Bigg\{ e^{i\pi z/2}
\left[
  \zeta\!\left(2-z,\frac{2-t}{4}\right)
- \zeta\!\left(2-z,\frac{4-t}{4}\right)
\right]\\
&\qquad
- e^{-i\pi z/2}
\left[
  \zeta\!\left(2-z,\frac t4\right)
- \zeta\!\left(2-z,\frac{2+t}{4}\right)
\right] \Bigg\}.
\end{aligned}
\end{equation}
Substituting \eqref{eq:Hurwitz-integral} into
\eqref{eq:Lih-Hurwitz}, and then setting \(v=4u\), yields
\[
\Lih_{z-1}(\pi t) = \frac1{2\pi^{2-z}}
\int_0^\infty \frac{u^{1-z}}{\sinh(2u)}
 \left[ e^{i\pi z/2} \left(e^{tu}-e^{-(2-t)u}\right)
-e^{-i\pi z/2} \left(e^{(2-t)u}-e^{-tu}\right)\right]du.
\]
The expression in brackets equals
\[
4\sinh(u)\, \sinh\left(\frac{i\pi z}{2}-(1-t)u\right),
\]
while
\[
\frac{\sinh u}{\sinh(2u)} =\frac1{2\cosh u}.
\]
This proves \eqref{eq:Lih-continuation} for \(\RE(z)<1\). Both sides
are holomorphic for \(\RE(z)<2\): the kernel \(\Lih_{z-1}(\pi t)\) is
entire in \(z\), while the integral defines a holomorphic function in
that half-plane. Analytic continuation therefore extends the identity
to \(\RE(z)<2\).

\paragraph{Separation into cosine and sine parts.}
The identities
\[
\cosh\left((1-t)u-\frac{i\pi z}{2}\right)
=
\cosh((1-t)u)\cos\left(\frac{\pi z}{2}\right)
-
i\sinh((1-t)u)\sin\left(\frac{\pi z}{2}\right)
\]
and
\[
\sinh\left(\frac{i\pi z}{2}-(1-t)u\right)
=
-\sinh((1-t)u)\cos\left(\frac{\pi z}{2}\right)
+
i\cosh((1-t)u)\sin\left(\frac{\pi z}{2}\right)
\]
hold for every complex \(z\). Identifying the even and odd parts, or
equivalently using
\[
\Cl_{z-1}(\pi t)
=
\frac{\Liz_{z-1}(\pi t)+\Liz_{z-1}(-\pi t)}2,
\qquad
\Sl_{z-1}(\pi t)
=
\frac{\Liz_{z-1}(\pi t)-\Liz_{z-1}(-\pi t)}{2i},
\]
and the corresponding identities for the half-shifted kernels, gives
\eqref{eq:Clz-continuation}, \eqref{eq:Slz-continuation},
\eqref{eq:Clh-continuation}, and~\eqref{eq:Slh-continuation}.

For \eqref{eq:Li-continuation}, the integrand is
\(O(u^{-\RE z})\) near \(u=0\) and
\(O(u^{1-\RE z}e^{-tu})\) at infinity. For
\eqref{eq:Lih-continuation}, the corresponding bounds are
\(O(u^{1-\RE z})\) near \(u=0\) and
\(O(u^{1-\RE z}e^{-tu})\) at infinity. Hence the stated absolute
convergence ranges follow immediately.

\end{document}